\newtheorem{theorem}{Theorem}[section]
\newtheorem{lemma}[theorem]{Lemma}
\newtheorem{corollary}[theorem]{Corollary}
\theoremstyle{definition}
\newtheorem{definition}[theorem]{Definition}
\newtheorem{remark}[theorem]{Remark}
\newtheorem{example}[theorem]{Example}
\numberwithin{equation}{section}
\DeclareMathOperator{\End}{End}
\DeclareMathOperator{\Ker}{Ker}
\DeclareMathOperator{\id}{id}
\DeclareMathOperator{\Spec}{Spec}
\DeclareMathOperator{\Imag}{Im}
\newcommand\bR{{\mathbb R}}
\newcommand{\bV}{{{\mathbb V}}}
\newcommand{\ra}{\rightarrow}
\begin{document}
\title{Extended Kato inequalities for conformal operators}

\vspace{0.5cm}

\author{Daniel Cibotaru}
\email{daniel@mat.ufc.br}
\author{Matheus Vieira}
\email{matheus.vieira@ufes.br}

\begin{abstract} We prove, for a class of first order differential operators containing generalized gradients, the  Dirac and Penrose twistor operators, a family of Kato inequalities that interpolates between the classical and the refined Kato inequalities. For the Hodge-de-Rham operator we get a more detailed result. As applications, we obtain various Kato inequalities from the literature.
\end{abstract}

\subjclass[2010]{Primary 47A63, Secondary 53B20}
%\keywords{Kato inequality}
\pagestyle{empty}
\maketitle
\tableofcontents
 \pagenumbering{arabic}
\section{Introduction}

Refined Kato inequalities have been used in Riemannian geometry and gauge theory for a long time. Recall that the classical Kato inequality says that for any section $\phi$ of a Riemannian or Hermitian vector bundle endowed with a metric connection $\nabla$, the following inequality holds at points where $\phi\neq 0$:
\[|\nabla \phi|\geq |d|\phi||.\]
When the section $\phi$ is a solution of a first order injectively elliptic system, the inequality can be refined by multiplying the right hand side by a constant $K>1$. The constant $K$ depends on the operator involved. An important observation due to Bourguignon \cite{Bo}, which led to the systematic study of the best such constants in \cite{B2000,CGH2000}, is that for a class of geometric operators called Stein-Weiss one can derive a simple relation between $K$ and the ellipticity constant of the symbol of the operator. The latter can be entirely determined from representation theoretic data.

We recall some results in chronological order.

In \cite{SSY1975}, Schoen-Simon-Yau proved that if $M^{n}$ is a
minimal hypersurface in $R^{n+1}$ then
\[
\left|\nabla A\right|^{2}\geq\left(1+\frac{2}{n}\right)\left|d\left|A\right|\right|^{2},
\]
where $A$ is the second fundamental form.

In \cite{BKN1989}, Bando-Kasue-Nakajima proved that if $M^{4}$ is an Einstein manifold then
\[
\left|\nabla Rm\right|^{2}\geq\left(1+\frac{2}{3}\right)\left|d\left|Rm\right|\right|^{2}.
\]
where $Rm$ is the Riemann curvature.

In \cite{R1991}, Rade proved that if $A$ is a Yang-Mills connection
on $R^{4}$ then
\[
\left|\nabla F\right|^{2}\geq\left(1+\frac{1}{2}\right)\left|d\left|F\right|\right|^{2},
\]
where $F$ is the curvature of $A$.

In \cite{GL1999} and \cite{Y2000}, Gursky-Lebrun and Yang proved
independently that if $M^{4}$ is an Einstein manifold then
\[
\left|\nabla W^{+}\right|^{2}\geq\left(1+\frac{2}{3}\right)\left|d\left|W^{+}\right|\right|^{2},
\]
where $W$ is the Weyl curvature and $W^{+}=\frac{1}{2}\left(W+*W\right)$.

In \cite{F2001}, Feehan proved that if $M^{n}$ is a Riemannian manifold
and $\phi$ is a harmonic spinor on $M^{n}$ then
\[
\left|\nabla\phi\right|^{2}\geq\left(1+\frac{1}{n-1}\right)\left|d\left|\phi\right|\right|^{2}.
\]

In \cite{B2000} and \cite{CGH2000}, Branson and Calderbank-Gauduchon-Herzlich
developed different tools to determine the ellipticity constant of the symbol of a Stein-Weiss operator from representation theoretic data. For example, Theorem 6.3 in \cite{CGH2000} (see also \cite{CZ2012} for an alternative proof) proves that if $M^{n}$
is a Riemannian manifold and $\omega$ is a harmonic $k$-form on
$M^{n}$ then
\[
\left|\nabla\omega\right|^{2}\geq\left(1+\min\left\{ \frac{1}{k},\frac{1}{n-k}\right\} \right)\left|d\left|\omega\right|\right|^{2}.
\]

% In \cite{TV2005a}, Tian-Viaclovsky, based on \cite{B2000} and \cite{CGH2000},
%proved that if $M^{4}$ is a Riemannian manifold with $d^{*}W^{+}=0$
%then
%\[
%\left|\nabla Ric\right|^{2}\geq\left(1+\frac{1}{2}\right)\left|d\left|Ric\right|\right|^{2},
%\]
%where $Ric$ is the Ricci curvature. See also the Kato inequality
%in \cite{TV2005b}.

%In \cite{CB2011}, Chen-Weber proved that if $M^{n}$ is an extremal
%Kahler manifold with nonconstant scalar curvature then
%\[
%\left|\bar{\nabla}\nabla E\right|^{2}+\frac{n-1}{2n}\left|\nabla\nabla E\right|\geq2\left|d\left|\nabla E\right|\right|^{2},
%\]
%where $E$ is the trace-free Ricci tensor and $n$ is the complex
%dimension.

In more recent years, Kato type inequalities involving additional quantities, typically embodied in the form of the squared norm of a first order differential operator, have been used both in geometry and in gauge theory. We give some examples of what we call extended Kato inequalities.

In \cite{CM2012}, Colding-Minicozzi proved that if $M^{n}$ is a
hypersurface in $R^{n+1}$ then
\[
\left|\nabla A\right|^{2}+\frac{2n}{n+1}\left|\nabla H\right|^{2}\geq\left(1+\frac{2}{n+1}\right)\left|d\left|A\right|\right|^{2},
\]
where $A$ is the second fundamental form and $H$ is the mean curvature.

In \cite{CJS2014}, Chen-Jost-Sun proved that if $M^{n}$ is a Riemannian
manifold and $\phi$ is a section on a Dirac bundle on $M^n$ then
\[
\left|\nabla\phi\right|^{2}+c\left|D\phi\right|^{2}\geq\left(1+\frac{c}{1+\left(n-1\right)c}\right)\left|d\left|\phi\right|\right|^{2},
\]
where $D$ is the Dirac operator.

%In \cite{GKS2018}, Gursky-Kelleher-Streets extended the result of
%Rade to arbitrary four-dimensional manifolds using ideas of Seaman
%\cite{S1991}.

In \cite{SU2019}, Smith-Uhlenbeck proved that if $M^{n}$ is a Riemannian
manifold and $\omega$ is a vector valued two-form on $M^{n}$ then
\[
\left|\nabla\omega\right|^{2}+\left|d^\nabla \omega\right|^{2}+\left|(d^\nabla)^{*}\omega\right|^{2}\geq\left(1+\frac{1}{n-1}\right)\left|d\left|\omega\right|\right|^{2}.
\]
They also proved that if $M^{n}$ is a Riemannian manifold and $\omega$
is a vector valued one-form on $M^{n}$ then
\[
\left|\nabla\omega\right|^{2}+\left|d^\nabla \omega\right|^{2}+\left|(d^\nabla)^{*}\omega\right|^{2}\geq\left(1+\frac{1}{n}\right)\left|d\left|\omega\right|\right|^{2}.
\]
The results are proved in the case of an open subset of $R^{n}$ but
they are written in a way that can be adapted to arbitrary Riemannian
manifolds. In \cite{F2023}, Fadel improved the constant of Smith-Uhlenbeck for
one-forms on three-dimensional manifolds.

Some other papers that make use of refined Kato inequalities are \cite{CB2011,GKS2018,S1991,TV2005a,TV2005b}. 

The purpose of this note is twofold. The first is to present an adequate theoretical context that allows us to prove extended Kato inequalities that reduce to refined Kato inequalities when the section is a solution of a first order injectively elliptic operator. The novelty here is that we consider first order operators whose symbol is a conformal projection (see Definition \ref{confdef}), which allows for a unified treatment of operators such as generalized gradient and Dirac operators. The second is to produce, within this theoretical background, general results of which some of the above examples are particular cases.

In our first general result we obtain an extended Kato inequality for conformal injectively elliptic first order linear differential operators (see Definition \ref{confoperator}).

\begin{theorem}
\label{thm:foldo} Let $D :\Gamma(E)\ra \Gamma(F)$ be a conformal, injectively elliptic, first order differential operator between Riemannian vector bundles $E,F \ra M$ with ellipticity constant $\epsilon$ and conformity factor $\rho$. Fix a constant $c\geq 0$. Let $\phi$
be a section of $E$ and let $x$ be a point in $M$ such that $\phi(x)\neq0$.
Then at $x$ the following pointwise inequality holds
\[
|\nabla\phi|^{2}+c|D\phi|^{2}\geq\left(1+\tilde{c}\right)|d|\phi||^{2},
\]
where
\[
\tilde{c}=\begin{cases}
\frac{\epsilon}{\rho^2-\epsilon} & if\,\,\,\left(D\phi\right)_{x}=0,\\
\frac{\epsilon c}{1+\left(\rho^2-\epsilon\right)c} & if\,\,\,\left(D\phi\right)_{x}\neq0.
\end{cases}
\]
\end{theorem}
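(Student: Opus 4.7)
At $x$ assume $d|\phi|(x)\neq 0$, for otherwise the right-hand side vanishes. Put $\xi := d|\phi|(x)\in T_x^*M$ and $\psi := \phi(x)/|\phi(x)|\in E_x$. The identity $\langle\nabla_Y\phi,\phi\rangle = |\phi|\,Y(|\phi|)$ shows that $\xi\otimes\psi$ is the orthogonal projection of $\nabla\phi(x)$ onto its own span in $T_x^*M\otimes E_x$, so
\[
\nabla\phi(x) = \xi\otimes\psi + \tau,\qquad \tau\perp\xi\otimes\psi,\qquad |\nabla\phi|^2 = |d|\phi||^2 + |\tau|^2.
\]
Let $\sigma:T^*M\otimes E\to F$ denote the bundle map produced by the principal symbol; the definition of a conformal operator should supply $D\phi = \sigma(\nabla\phi)$ pointwise together with $\sigma\sigma^* = \rho^2\,\id_F$, and injective ellipticity amounts to $|\sigma(\eta\otimes e)|^2 \geq \epsilon\,|\eta|^2|e|^2$ on simple tensors. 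Setting $X := \sigma(\xi\otimes\psi)\in F_x$, these give $\epsilon|\xi|^2 \leq |X|^2 \leq \rho^2|\xi|^2$ and $D\phi(x) = X + \sigma(\tau)$, so the theorem reduces to the algebraic lower bound
\[
|\tau|^2 + c\,|X+\sigma(\tau)|^2 \;\geq\; \tilde c\,|\xi|^2 \qquad\text{for every } \tau\in W:=(\xi\otimes\psi)^\perp.
\]

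The proof of this bound is a short spectral computation for $\sigma_W := \sigma|_W$. Since $W^\perp$ is spanned by $(\xi\otimes\psi)/|\xi|$, subtracting its contribution from $\sigma\sigma^*=\rho^2\,\id_F$ yields
\[
\sigma_W\,\sigma_W^* \;=\; \rho^2\,\id_F \;-\; \tfrac{1}{|\xi|^2}\,X X^*,
\]
in particular $X$ is an eigenvector of $\sigma_W\sigma_W^*$ with eigenvalue $\mu := \rho^2 - |X|^2/|\xi|^2 \in [0,\rho^2-\epsilon]$. Completing the square in $\tau$ and using the standard identity $\sigma_W(\id+c\sigma_W^*\sigma_W)^{-1}\sigma_W^* = (\id+c\sigma_W\sigma_W^*)^{-1}\sigma_W\sigma_W^*$, one finds that the minimum of the left-hand side over $\tau\in W$ equals
\[
c\,\langle X,(\id+c\sigma_W\sigma_W^*)^{-1}X\rangle \;=\; \frac{c|X|^2}{1+c\mu}.
\]
Setting $t := |X|^2/|\xi|^2\in[\epsilon,\rho^2]$, the function $t\mapsto ct/(1+c(\rho^2-t))$ is increasing, so its minimum on this interval is attained at $t=\epsilon$ and equals $\tilde c = \epsilon c/(1+(\rho^2-\epsilon)c)$. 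This is the Case 2 constant. The Case 1 constant $\epsilon/(\rho^2-\epsilon)$ then follows by minimising $|\tau|^2$ subject to the affine constraint $\sigma_W\tau = -X$ using the same spectral data, or equivalently by passing to $c\to\infty$ in the bound just obtained.

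The main technical point is identifying the rank-one structure $\sigma_W\sigma_W^* = \rho^2\id_F - |\xi|^{-2}XX^*$: once one sees that $X$ remains an eigenvector of $\sigma_W\sigma_W^*$ after restricting $\sigma$ to the codimension-one subspace $W$, the matrix optimisation collapses to a one-variable problem in $t=|X|^2/|\xi|^2$ and can be controlled by $\epsilon$ and $\rho$ alone. As a sanity check, the Dirac symbol satisfies $\rho^2=n$ and $\epsilon=1$, so Case 1 recovers Feehan's constant $n/(n-1)$; and for orthogonal-projection symbols $\rho=1$ one recovers the Calderbank--Gauduchon--Herzlich refined Kato constant $\epsilon/(1-\epsilon)$.
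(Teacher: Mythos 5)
Your argument is correct, and it takes a genuinely different route from the paper's. The paper decomposes $T_x^*M\otimes E_x=(L\otimes E_x)\oplus(L^\perp\otimes E_x)$ with $L=\langle \xi_0\rangle$, proves the spectral containment $\Spec(\hat P_1\hat P_1^*)\subset[0,\rho^2-\epsilon]$ for the restriction of the symbol to $L^\perp\otimes E_x$ (Lemma \ref{alglem0}), feeds this into a Peter--Paul type inequality (Lemma \ref{alglem}), and finishes with a separate Cauchy--Schwarz step $|\nabla_{\xi_0^\sharp}\phi|\geq|d|\phi||$. You instead split off only the rank-one direction $\xi\otimes\psi$ (which already carries $|d|\phi||^2$ exactly, absorbing the Cauchy--Schwarz step into the choice of $\psi=\phi/|\phi|$), and exploit the rank-one update $\sigma_W\sigma_W^*=\rho^2\id_F-|\xi|^{-2}XX^*$ to compute the minimum of $|\tau|^2+c|X+\sigma_W\tau|^2$ \emph{exactly} as $c|X|^2/(1+c\mu)$, reducing everything to monotonicity in the single variable $t=|X|^2/|\xi|^2\in[\epsilon,\rho^2]$; the $c\to\infty$ limit then handles the case $(D\phi)_x=0$. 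I checked the key identities: the projection claim for $\xi\otimes\psi$, the formula $\sigma_W\sigma_W^*=\sigma(1-Q)\sigma^*$, the eigenvalue $\mu=\rho^2-|X|^2/|\xi|^2\le\rho^2-\epsilon$, and the least-squares value via the push-through identity all hold, and the resulting constants match the theorem. What your approach buys is an exact, one-variable optimisation with no loss except at the very last step (so it makes transparent where sharpness can fail, namely $t>\epsilon$); what the paper's approach buys is reusability, since its Lemma \ref{alglem} and the spectral-interval formulation are exactly what is iterated over the four-fold decomposition in the proof of Theorem \ref{thm:hodge}, whereas your rank-one collapse is tied to a single conformal projection. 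Two small presentational points: state explicitly that $|\phi|$ is differentiable at $x$ because $\phi(x)\neq0$, and note that when the constraint $\sigma_W\tau=-X$ is used for Case 1 one needs $\mu>0$ (which holds whenever the constraint is feasible, since $\mu=0$ forces $\sigma_W^*X=0$ and hence $X\perp\Imag\sigma_W$ while $|X|^2\geq\epsilon|\xi|^2>0$); the $c\to\infty$ limit avoids this case distinction entirely.
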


Note that taking $c=0$ we get the classical Kato inequality, while taking $c=\infty$, which corresponds to the border case when $(D\phi)_x=0$ (with the convention $\infty\cdot 0=0$), we get the refined Kato inequality with optimal constant (see also Theorem \ref{Dinjelliptic}).

The classical Hodge-deRham operator $d+d^ *$ is the sum of two operators which, taken separately, are not injectively elliptic.  In the next result we obtain a Kato inequality for vector valued differential forms which are not necessarily closed or coclosed. It should be viewed as a refinement of Theorem \ref{thm:foldo} for a situation where the symbol of the injectively elliptic operator has two conformal projections as components. It is unclear whether this refinement can be extended to other Branson's  elliptic generalized gradients \cite{Br1} for two element sets.

\begin{theorem}
\label{thm:hodge} Let $\nabla$ be a metric connection on a Riemannian vector bundle $E \ra M^n$. Fix constants $c\geq 0$ and $c_{*}\geq 0$. Let $\phi$ be a section of $\Lambda^{k}T^{*}M\otimes E$ and let $x$ be a point in $M$ such that $\phi(x)\neq0$. Then at $x$ the following pointwise inequality holds
\[
|\nabla\phi|^{2}+c|d^\nabla \phi|^{2}+c_{*}|(d^{\nabla})^*\phi|^{2}\geq\left(1+\min\left\{ \tilde{c},\tilde{c}_{*}\right\} \right)|d|\phi||^{2},
\]
where
\[
\tilde{c}=\begin{cases}
\frac{1}{k} & if\,\,\,\left(d^\nabla \phi\right)_{x}=0,\\
\frac{c}{1+kc} & if\,\,\,\left(d^\nabla \phi\right)_{x}\neq0,
\end{cases}\,\,\,\,\,\,\,\,\,\,\tilde{c}_{*}=\begin{cases}
\frac{1}{n-k} & if\,\,\,\left((d^\nabla)^{*}\phi\right)_{x}=0,\\
\frac{c_{*}}{1+\left(n-k\right)c_{*}} & if\,\,\,\left((d^\nabla)^{*}\phi\right)_{x}\neq0.
\end{cases}
\]
On the left hand side, $d^\nabla$ is the covariant exterior derivative of
$\nabla$ and $(d^\nabla)^*$ is the adjoint of $d^\nabla$.
\end{theorem}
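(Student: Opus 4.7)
The plan is to prove Theorem~\ref{thm:hodge} by reducing it to a pointwise inequality and then establishing a stronger weighted form from which the $\min$-statement of the theorem follows by a convex-combination argument.

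I would begin with the same setup used in the proof of Theorem~\ref{thm:foldo}: at the point $x$, set $\psi = \phi(x)/|\phi(x)|$ and $\xi = d|\phi|(x)$, and write $\nabla_{e_i}\phi = \xi_i\psi + v_i$ with $v_i$ orthogonal to $\psi$ in $\Lambda^k T^*_xM\otimes E_x$. This yields $|\nabla\phi|^2 = |\xi|^2 + \sum_i|v_i|^2$ together with
\[
d^\nabla\phi = \xi\wedge\psi + \sum_i e^i\wedge v_i, \qquad (d^\nabla)^*\phi = -\iota_{\xi^\sharp}\psi - \sum_i\iota_{e_i}v_i.
\]
The Clifford-type identity $|\xi\wedge\psi|^2 + |\iota_{\xi^\sharp}\psi|^2 = |\xi|^2$ partitions $|d|\phi||^2 = |\xi|^2$ into a wedge piece $p := |\xi\wedge\psi|^2$ and a contraction piece $q := |\iota_{\xi^\sharp}\psi|^2$. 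Subtracting $|\xi|^2$ then shows that the theorem's inequality is equivalent to $\sum_i|v_i|^2 + c\,|d^\nabla\phi|^2 + c_*\,|(d^\nabla)^*\phi|^2 \geq \min(\tilde{c},\tilde{c}_*)(p+q)$.

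Next I would prove the stronger pointwise bound
\[
\sum_i|v_i|^2 + c\,|d^\nabla\phi|^2 + c_*\,|(d^\nabla)^*\phi|^2 \;\geq\; \tilde{c}\,p + \tilde{c}_*\,q,
\]
which implies the theorem via the elementary estimate $\tilde{c}\,p + \tilde{c}_*\,q \geq \min(\tilde{c},\tilde{c}_*)(p+q)$. The strategy is to split the left-hand side into a wedge part bounded below by $\tilde{c}\,p$ and a contraction part bounded below by $\tilde{c}_*\,q$, each handled by a Kato-type argument modeled on Theorem~\ref{thm:foldo}. For the wedge piece, the relevant symbol $\sigma^d:T^*_xM\otimes\Lambda^k T^*_xM\to\Lambda^{k+1}T^*_xM$ is a conformal projection with $\rho^2 = k+1$ and $\epsilon = 1$, so that the formulas $\epsilon/(\rho^2-\epsilon) = 1/k$ and $\epsilon c/(1+(\rho^2-\epsilon)c) = c/(1+kc)$ reproduce exactly the constant $\tilde{c}$ of the theorem. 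The contraction piece is analogous with $\sigma^{d^*}$, where $\rho^2 = n-k+1$ and $\epsilon=1$ produce $\tilde{c}_*$.

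The main obstacle will be carrying out the wedge/contraction splitting of $\sum_i|v_i|^2$ cleanly, so that no cross terms spoil the additivity of the two sub-estimates. This rests on the Stein-Weiss orthogonal decomposition of $T^*_xM\otimes\Lambda^k T^*_xM$ into the images of $(\sigma^d)^*$, $(\sigma^{d^*})^*$ and a trace-free complement, together with the identity $\sigma^d(\sigma^{d^*})^* = 0$, a consequence of conformality of both symbols that is equivalent to orthogonality of the first two summands. Combined with the constraint $v_i\perp\psi$ and the partition identity $|\xi\wedge\psi|^2 + |\iota_{\xi^\sharp}\psi|^2 = |\xi|^2$, this orthogonality is what ultimately ensures the two sub-estimates add up to the reduced inequality with exactly the right-hand side claimed.
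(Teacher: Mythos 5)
Your setup and the reduction to $\sum_i|v_i|^2+c|d^\nabla\phi|^2+c_*|(d^\nabla)^*\phi|^2\geq\min(\tilde c,\tilde c_*)(p+q)$ are correct, but the ``stronger pointwise bound'' $\sum_i|v_i|^2+c|d^\nabla\phi|^2+c_*|(d^\nabla)^*\phi|^2\geq\tilde c\,p+\tilde c_*\,q$ on which everything rests is false for $k\geq 2$, and the mechanism you propose for it cannot produce the constants $1/k$ and $1/(n-k)$. First, the symbol $\sigma^d=\epsilon$ is a conformal projection with $\rho^2=k+1$, but for $k\geq 1$ it is \emph{not} injectively elliptic: $\epsilon_\xi$ kills every form of the shape $\xi\wedge\alpha$, so its ellipticity constant is $0$, not $1$, and the formula $\epsilon/(\rho^2-\epsilon)$ of Theorem \ref{thm:foldo} is not available. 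What the Stein--Weiss decomposition $T^*_xM\otimes\Lambda^kT^*_xM=\Ima\epsilon^*\oplus\Ima\iota^*\oplus(\Ker\epsilon\cap\Ker\iota)$ actually feeds into Lemma \ref{alglem} (with $U_2=\Ima\epsilon^*$, where $\epsilon\epsilon^*=(k+1)\id$) is the constant $1/(k+1)$, not $1/k$; and the constraint $v_i\perp\psi$ does not repair this, because the image of $T^*_xM\otimes\psi^\perp$ under $\epsilon$ can still carry norm squared strictly larger than $k$ in the direction of $\xi\wedge\psi$.

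Concretely: take $M=\bR^4$, $E$ the trivial real line bundle, $k=2$, $\psi=\tfrac{1}{\sqrt2}(dx^1\wedge dx^2+dx^3\wedge dx^4)$, $\xi=dx^1$, and $v:=-\tfrac25\,\Pi\bigl(\epsilon^*(\xi\wedge\psi)\bigr)$, where $\Pi$ is the orthogonal projection onto $T^*_xM\otimes\psi^\perp$. A direct computation gives $\epsilon\,\Pi\,\epsilon^*(\xi\wedge\psi)=\tfrac52\,\xi\wedge\psi$, hence $\epsilon(v)=-\xi\wedge\psi$ and $|v|^2=\tfrac4{25}\cdot\tfrac52\,|\xi\wedge\psi|^2=\tfrac15$, while $p=|\xi\wedge\psi|^2=\tfrac12$. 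The section $\phi(y)=\psi+\sum_iy^i(\xi_i\psi+v_i)$ realizes these data at $y=0$ and satisfies $(d\phi)_0=0$ and $(d^*\phi)_0\neq0$. With $c_*=0$ your inequality demands $\sum_i|v_i|^2\geq\tfrac1k\,p=\tfrac14$, but $\sum_i|v_i|^2=\tfrac15$. (The theorem itself is unaffected here, since $\min\{\tilde c,\tilde c_*\}=0$ in this configuration.) The paper avoids this by splitting along the line $L=\langle\xi_0\rangle\subset T^*_xM$ rather than along $\psi$ in the fiber: with $T^*_xM=L\oplus W$ and $\Lambda^kT^*_xM=(L\wedge\Lambda^{k-1}W)\oplus\Lambda^kW$, the restriction of $\epsilon$ to $W\otimes(L\wedge\Lambda^{k-1}W)\otimes E$ is a genuine conformal projection with factor exactly $\sqrt k$ (and $\iota$ on $W\otimes\Lambda^kW\otimes E$ one with factor $\sqrt{n-k}$), which is what produces $1/k$ and $1/(n-k)$; the role of your $p,q$ is played instead by $|(\nabla\phi)_{12}|^2,|(\nabla\phi)_{11}|^2$ with $|(\nabla\phi)_{11}|^2+|(\nabla\phi)_{12}|^2=|\nabla_{\xi_0^\sharp}\phi|^2\geq|d|\phi||^2$. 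To salvage your argument you would have to replace the $\psi$-based partition of $|d|\phi||^2$ by this $\xi_0$-based one.
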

We would like to point out:

(i) Since $\frac{1}{k}>\frac{c}{1+kc}$ and $\frac{1}{n-k}>\frac{c_{*}}{1+\left(n-k\right)c_{*}}$
we see that the conclusion of Theorem \ref{thm:hodge} is stronger
for closed or co-closed forms. In particular, for harmonic forms
we recover the known Kato inequality (see e.g. \cite{CZ2012}), but
in the more general setting of vector valued forms.

(ii) Note that \[\lim_{c\to\infty}\frac{c}{1+kc}=\frac{1}{k}\qquad \mbox{and} \qquad
\lim_{c_{*}\to\infty}\frac{c_{*}}{1+\left(n-k\right)c_{*}}=\frac{1}{n-k}.\] In this sense the extended Kato inequalities interpolate between the classical $c=c_*=0$ and the refined $c=c_*=\infty$ cases. A similar observation applies to Theorem \ref{thm:foldo}.

\begin{remark}
Using our main results, we will recover many sharp inequalities described above. On the other hand, we were unable to recover some inequalities, such as the one of Colding-Minicozzi (Lemma 10.2 in \cite{CM2012}). To see this, note that the second fundamental form $A$ can be seen as a linear operator $A:TM \to TM$. As vector valued one-form, by the Codazzi equation, the second fundamental form satisfies $dA = 0$ and $d^{*}A=-dH$. Applying Theorem \ref{thm:hodge} with $\phi = A$, $k=1$ and $c^{*}=\frac{2n}{n+1}$ we get the extended Kato inequality
$$\left|\nabla A\right|^{2}+\frac{2n}{n+1}\left| \nabla H\right|^{2}\geq\left(1+\frac{2}{2n-1+\frac{1}{n}}\right)\left|\nabla\left|A\right|\right|^{2}.$$
This inequality is less sharp than the one of Colding-Minicozzi. We would  like to mention that in certain situations it is sufficient to have a Kato inequality with a non-sharp constant. For example, the above inequality suffices for the proof of Proposition 10.4 in \cite{CM2012}, which was the goal of their inequality.

The inequalities are sharp in the refined case  by Branson's Theorem 7 from \cite{B2000}.
\end{remark}

The paper is organized as follows. In Section \ref{sect2} we introduce the class of conformal first order linear differential operators and prove an $"L^1"$ extended Kato inequality.  In Section \ref{sect3} we prove Theorem \ref{thm:foldo} and in Section \ref{sect4} we prove Theorem \ref{thm:hodge}. Finally, in Section \ref{sect5} we give some applications.

While this paper was under review, particular cases of Theorem \ref{thm:hodge} appeared in \cite{CS}, albeit with a different  technique.

\section{Conformal operators of first order}\label{sect2}
\begin{definition}\label{confdef} A surjective linear operator $P:V\ra W$ between inner product vector spaces is called a conformal projection if $PP^ *=\rho^ 2\id_{W}$ for some  constant $\rho>0$, called the  conformity factor. The operator $P$ is called an orthogonal projection if $\rho=1$.
\end{definition}

\begin{remark}
A classical orthogonal projection $P:V\ra V$ such that $P=P^ *$ and $P^ 2=P$ is an orthogonal projection in the sense of Definition \ref{confdef} if we restrict the codomain of $P$ to $\Imag P$.

Conversely, if we have an orthogonal projection $P:V \ra W$ in the sense of Definition \ref{confdef} then $P^ *P:V\ra V$ an orthogonal projection in the classical sense. In other words, after identifying $W$ with $P^ *(W)$ via $P^*$ (which is an isometric injection), we can think of $P$ as the classical orthogonal projection onto the subspace $P^ *(W)$ of $V$.

With this definition, isometric isomorphisms are particular cases of orthogonal projections.

The adjoint of a conformal projection is a conformal immersion. 
\end{remark}

We fix an inner product vector space $(V,g)$ of dimension $n$ with orthonormal basis $\{e_1,\ldots, e_n\}$ and dual basis $\{e_1^ *,\ldots, e_n^ *\}$.

\begin{example}\label{firstex}

\item[(1)] Clifford multiplication. Let $S$ be a hermitian vector space and let $c:V\ra \End(S)$ be a unitary representation of the Clifford algebra of $(V,g)$, i.e. a linear morphism such that
\[ c(v)\circ c(w)+c(w)\circ c(v)=- 2 g(v,w).\]
The induced morphism $c:V\otimes S\ra S$ is a conformal projection. Indeed, the adjoint $c^*:S\ra V\otimes S$ is given by
\[ c^*(s)=\sum_{i=1}^ne_i\otimes c_{e_i}^ *(s),\]
where $c_{e_i}^ *=-c_{e_i}$ is the adjoint of $c_{e_i}:S\ra S$. Therefore
\[c(c^*(s))=-\sum_{i=1}^ nc_{e_i}c_{e_i}(s)=ns.\]

\item[(2)] Exterior multiplication. Let $\epsilon:V^ *\otimes \Lambda^{k}V^ *\ra \Lambda^ {k+1}V^ *$ be the exterior multiplication given by $\epsilon(f\otimes \omega)=f\wedge \omega$. The exterior multiplication is a conformal projection. Indeed, the adjoint of $\epsilon_f:\Lambda^kV^*\ra \Lambda^ {k+1}V^*$ is the interior multiplication $\iota_{f^{\sharp}}:\Lambda^ {k+1}V^ *\ra \Lambda^ kV^ *$. Then the adjoint $\epsilon^ *:\Lambda^ {k+1}V^ *\ra V^ *\otimes \Lambda^kV^ * $ is given by
\[\epsilon^ *(\eta)=\sum_{i} e_i^ *\otimes (\epsilon_{e_i^ * })^*(\eta)=\sum_{i=1}^ ne_i^ *\otimes \iota_{e_i}(\eta).\]
We can easily check on a basis of $\Lambda^{k+1}V^ *$ that
\[ \epsilon(\epsilon^ *(\eta))=(k+1)\eta.\]

\item[(3)] Interior multiplication. Let $\iota:V^ *\otimes \Lambda^{k}V^ *\ra \Lambda^ {k-1}V^ *$ be the interior multiplication given by $\iota(f\otimes \omega):=\iota_{f^ {\sharp}}(\omega)$. The interior multiplication is a conformal projection. Indeed, a similar computation as in the previous example gives
\[ \iota(\iota^ *(\eta))=(n-k+1)\eta.\]

\item[(4)] Symmetrization (symmetric product). Let $S^k(V^ *)$ be the space of $k$-multilinear symmetric morphisms on $V$ and let $\mathcal{S}:V^ *\otimes S^k(V^ *)\ra S^ {k+1}(V^ *)$ be the symmetrization morphism given by
\[\mathcal{S}(f\otimes A): (X_1,\ldots,X_{k+1})\ra \sum_{i=1}^ {k+1} f(X_i)A(X_1,\ldots,\hat{X_i},\ldots, X_{k+1}). \]
We choose the inner products on $V^ *\otimes S^k(V^ *)$ and $ S^ {k+1}(V^ *)$ induced from the canonical inner product of the tensor product $T^ {k+1}(V^ *)$. The symmetrization morphism is a conformal projection. Indeed, the adjoint $\mathcal{S}^* = (k+1)\iota$ where $\iota:S^ {k+1}V^ *\hookrightarrow V^ *\otimes S^k(V^ *)$ is the canonical inclusion of subspaces of $T^ {k+1}(V^ *)$. Then
\[\mathcal{S}\mathcal{S}^ *=(k+1)^ 2\id_{S^ {k+1}(V^ *)}. \]

\item[(5)] Contraction. Let $\mathcal{C}:V^*\otimes S^k(V^ *)\ra S^{k-1}(V^ *)$ be the contraction morphism given by
$$\mathcal{C}(f \otimes A):(X_1,\ldots,X_{k-1}) \ra A(f^ {\sharp},X_1,\ldots,X_{k-1}) .$$
The contraction morphism is a conformal projection. Indeed, denoting $\mathcal{C}_f:S^k(V^ *)\ra S^{k-1}(V^ *)$ with  $\mathcal{C}_f(A)= \mathcal{C}(f\otimes A)$ we have
\[(\mathcal{C}_f)^ *(B)=\frac{1}{k}\mathcal{S}(f\otimes B),\]
so
\[\mathcal{C}^ *(B)=\sum_{i=1}^ ne_i^ *\otimes (\mathcal{C}_{e_i^ *})^ *(B)=\frac{1}{k}\sum_{i=1}^ne_i^ * \otimes \mathcal{S}(e_i^ *\otimes B),\]
which implies
\[\mathcal{C}(\mathcal{C}^ *(B))=\frac{n+k-1}{k}B.\]

\item[(6)] Non-zero (intertwining) morphisms $\rho:V\ra W$ of orthogonal representations of a group $G$ with \emph{irreducible} target $W$ are conformal projections. Indeed, the morphism $\rho\rho^ *:W\ra W$ is a symmetric morphism of irreducible representations. By Schur lemma it is a multiple of the identity. Examples (2) and (3) above are particular cases for $G=SO(n)$.

\item[(7)] The differential of a Riemannian submersion is an orthogonal projection at every point.
\end{example}

We consider first order differential operators $D:\Gamma(E)\ra \Gamma(F)$ between Riemannian vector bundles $E,F\ra M$. These operators are compositions $D=\sigma(D) \circ \nabla$ of a bundle morphism $\sigma(D): \Gamma (T^ *M\otimes E) \ra \Gamma (F)$, called the principal symbol, and a metric connection $\nabla$ on $E$. In other words $D$ is the composition
\[D: \xymatrix{\Gamma(E)\ar[r]^ {\nabla\qquad} & \Gamma(T^ *M\otimes E) \ar[r]^ {\quad\sigma(D)} & \Gamma(F)}.\]

\begin{definition}\label{confoperator} A first order linear differential operator $D:\Gamma(E)\ra \Gamma(F)$ between Riemannian vector bundles $E,F\ra M$ is called a conformal operator if the principal symbol $P=\sigma(D): \Gamma (T^ *M\otimes E) \ra \Gamma (F)$ is a conformal projection at every point of $M$. The conformity factor of $D$ at a point $x$ is the conformity factor of $P_x$.
\end{definition}

\begin{example}

\item[(1)] Metric connections $\nabla:\Gamma(E)\ra \Gamma(T^ *M\otimes E)$ on a Riemannian vector bundle $E \ra M$ are conformal operators with conformity factor $\rho=1$.

\item[(2)] Dirac operators are conformal as follows from item (1) of Example \ref{firstex}. The conformity factor satisfies $\rho^ 2=n$, where $n$ is the dimension of the manifold.

\item[(3)] The exterior derivative $d$ and its adjoint $d^ *$ are conformal operators.

\item[(4)] Generalized gradients, a subclass of Stein-Weiss operators (see \cite{Br1,CGH2000}) are conformal. These operators arise as 
\[D:=P\circ \nabla:\Gamma(\bV(\lambda))\ra \Gamma(F),\]
where $\nabla$ is the Levi-Civita connection on the vector bundle $\bV(\lambda)$ associated to a representation $\lambda$ on $SO(n)$ or $Spin(n)$ and the principal bundle of orthonormal frames on a Riemannian manifold $M$ and $P:\bV(\lambda)\ra F$ is the bundle orthogonal projection induced by the projection  of $\tau\otimes \lambda$ onto an irreducible subrepresentation of $\tau\otimes \lambda$.

\item[(5)] Twisted generalized gradients are conformal.  They arise as follows: 
\[D\otimes E:\Gamma(\bV(\lambda)\otimes E)\ra \Gamma(F\otimes E) \qquad D\otimes E:=(P\otimes \id_E)\circ \nabla^ {\otimes},\]
where $\bV(\lambda)$ and $F$ are as before and $(E,\nabla^E)$ is a metric bundle with compatible connection. The connection $\nabla^ {\otimes}$ is the tensor product of the connections on $\bV(\lambda)$ and $E$. Examples contain $\frac{1}{\sqrt{k+1}}d^ {\nabla}$ and $\frac{1}{\sqrt{n-k+1}}(d^{\nabla})^*$ where $d^ {\nabla}$  is the exterior derivative acting on forms with values in $E$ and $(d^ {\nabla})^ *$ is its formal adjoint (see Lemma \ref{nL}).

\item[(6)] Penrose twistor operators are conformal. The symbol is the orthogonal projection onto the kernel of the Clifford multiplication.
\end{example}

\begin{definition} A first order linear differential operator $D:\Gamma(E)\ra \Gamma(F)$ between Riemannian vector bundles $E,F\ra M$ with principal symbol $P$ is called injectively elliptic at a point $x\in M$ if $P_{\xi}^ *P_{\xi}:E_x\ra E_x$ is bijective for every unit vector $\xi\in T_x^*M $. The number
\[ \epsilon = \inf_{|\xi|=1}\inf_{|v|=1} \|P_{\xi}(v)\|^2\]
is called the ellipticity constant of $D$ at $x$.
\end{definition}

Clearly $D$ is injectively elliptic if and only if $\epsilon>0$.

\begin{example}

\item[(1)] Metric connections $\nabla:\Gamma(E)\ra \Gamma(T^ *M\otimes E)$ on a Riemannian vector bundle $E \ra M$ are injectively elliptic operators with ellipticity constant $\epsilon=1$.

\item[(2)] Dirac operators are injectively elliptic with ellipticity constant $\epsilon=1$.

\item[(3)] Neither $d$ nor $d^ *$ are injectively elliptic except in the border degree cases $k=0$ and $k=n$ respectively. However, their sum $d+d^ *$ is injectively elliptic.

\item[(4)] The classification of injectively elliptic Stein-Weiss operators was done by Branson in \cite{Br1} (see also \cite{Pi}).  For Stein-Weiss operators the ellipticity constant is a constant function on $M$. This follows by noting that the symbol of a Stein-Weiss operator is up to conjugation the same in every fiber, and can be identified with the morphism of representations which is the orthogonal projection of $\tau\otimes \lambda$ onto a sum of irreducible subrepresentations. 

\item[(5)] The ellipticity constant of a twisted Stein-Weiss operator $D\otimes E$ equals the ellipticity constant of the original untwisted Stein-Weiss operator $D$. Therefore $D\otimes E$ is injectively elliptic if and only if $D$ is.  This follows from the following straightforward symbol identity 
\[\sigma_{\xi}((D\otimes E)^ *D\otimes E)=\sigma_{\xi}(D^ *D)\otimes \id_E.\]
 For example, the ellipticity constant for 
\[ \frac{1}{\sqrt{n-k+1}}(d^{\nabla})^ *+\frac{1}{\sqrt{k+1}}d^ {\nabla}\]
is the same as that of $\frac{1}{\sqrt{n-k+1}}d^ *+\frac{1}{\sqrt{k+1}}d$.

\item[(6)] Penrose twistor operators are injectively elliptic with $\epsilon=\frac{n-1}{n}$ (and conformity factor $\rho = 1$).
\end{example}

The following "$L^1$" extended Kato inequality is a rather immediate extension of the results that appeared in \cite{B2000,CGH2000} which makes clear the pointwise nature of refined Kato inequalities. 

\begin{theorem}\label{Dinjelliptic} Let $D:\Gamma(E)\ra\Gamma(F)$ be a conformal injectively elliptic operator  between Riemannian vector bundles $E,F \ra M$ with ellipticity constant $\epsilon$ and conformity factor $\rho$. Let $\phi$ be a section of $E$ and let $x$ be a point in $M$ such that $\phi(x)\neq0$. Then at $x$ the following pointwise inequality holds
\begin{equation}\label{fexK} \sqrt{\rho^ 2- \epsilon}|\nabla \phi|+ |D \phi|\geq  \rho|d|\phi||.
\end{equation}
In particular, if $(D\phi)_x=0$ then the following refined Kato inequality holds
\[|\nabla \phi|^ 2\geq \left(1+\frac{\epsilon}{\rho^2-\epsilon}\right)|d|\phi||^ 2. \]
\end{theorem}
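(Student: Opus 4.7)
The plan is to reduce the pointwise inequality to a linear-algebraic statement on the single fibre $T_x^*M\otimes E_x$. Set $\psi:=\phi(x)/|\phi(x)|$, a unit vector in $E_x$. Since $(d|\phi|)(v)=\langle\nabla_v\phi,\psi\rangle$, we have $d|\phi|=C_\psi(\nabla\phi)$, where the contraction operator $C_\psi\colon T_x^*M\otimes E_x\to T_x^*M$, $C_\psi(\xi\otimes e):=\langle e,\psi\rangle\xi$, is itself a conformal projection with conformity factor $1$ whose adjoint is $\xi\mapsto\xi\otimes\psi$; consequently $C_\psi^*C_\psi$ is the orthogonal projection onto the subspace $W:=T_x^*M\otimes\psi$.

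Next, I would orthogonally decompose $\nabla\phi=\alpha+\beta$ with $\alpha\in\Ker P_x$ and $\beta\in\Imag(P_x^*)=(\Ker P_x)^\perp$. From $PP^*=\rho^2\,\id$ one reads off $\beta=P^*(D\phi/\rho^2)$ and hence $|\beta|=|D\phi|/\rho$, and the obvious bound $|C_\psi(\beta)|\leq|\beta|$ handles the $\beta$-contribution.

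The crux is the fibrewise contraction estimate
\[|C_\psi(\alpha)|^2\leq\Bigl(1-\tfrac{\epsilon}{\rho^2}\Bigr)|\alpha|^2,\qquad\alpha\in\Ker P_x.\]
Writing $|C_\psi(\alpha)|^2=\langle\alpha,C_\psi^*C_\psi\alpha\rangle=|\pi_W\alpha|^2$, the operator norm of $\pi_W\colon\Ker P_x\to W$ equals that of its adjoint $\pi_{\Ker P_x}\colon W\to\Ker P_x$ by duality. A generic $w\in W$ has the form $w=\xi\otimes\psi$; the orthogonal projection onto $\Imag P^*$ is $P^*P/\rho^2$, and a direct computation using $PP^*=\rho^2\,\id$ together with the ellipticity bound $|P_\xi\psi|^2\geq\epsilon|\xi|^2$ yields $|\pi_{\Imag P^*}w|^2=|Pw|^2/\rho^2\geq(\epsilon/\rho^2)|w|^2$, hence $|\pi_{\Ker P_x}w|^2\leq(1-\epsilon/\rho^2)|w|^2$. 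This is the one step where ellipticity really enters and is the main technical obstacle; everything else is bookkeeping.

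Finally, the triangle inequality and multiplication by $\rho$ give
\[\rho|d|\phi||\leq\sqrt{\rho^2-\epsilon}\,|\alpha|+|D\phi|\leq\sqrt{\rho^2-\epsilon}\,|\nabla\phi|+|D\phi|,\]
which is \eqref{fexK}. The refined Kato inequality in the case $(D\phi)_x=0$ follows directly: then $\beta=0$, $\alpha=\nabla\phi$, and the squared contraction estimate reads $|d|\phi||^2\leq(1-\epsilon/\rho^2)|\nabla\phi|^2=\tfrac{\rho^2-\epsilon}{\rho^2}|\nabla\phi|^2$, which after rearranging gives the stated conclusion.
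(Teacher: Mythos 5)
Your proof is correct. It is, at heart, the dual formulation of the paper's argument: both proofs come down to the single fibrewise fact that ellipticity forces the subspace $T_x^*M\otimes\langle\psi\rangle$ of ``rank one in the $E$ factor'' elements to meet $\Ker P_x$ at an angle controlled by $\sqrt{1-\epsilon/\rho^2}$. The paper implements this by decomposing the test vector $\xi_0\otimes\phi$ (with $\xi_0$ the unit covector along $d|\phi|^2$) into $P^*(F)\oplus\Ker P$, bounding $|\pi_{\Ker P}(\xi_0\otimes\phi)|$ directly, and finishing with Cauchy--Schwarz against $\nabla\phi$; you instead decompose $\nabla\phi=\alpha+\beta$ into $\Ker P\oplus\Imag P^*$, push the estimate from $\pi_{\Ker P}\bigr|_W$ to $\pi_W\bigr|_{\Ker P}$ by operator-norm duality, and apply it to $\alpha$. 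The two mechanisms (Cauchy--Schwarz on a fixed vector versus equality of norms of adjoint operators) are interchangeable here, so neither route is stronger; what your version buys is a small tidiness gain --- you work with the full covector $d|\phi|$ via the contraction $C_\psi$ and never need to normalize $d|\phi|^2$, so the paper's preliminary reduction to a neighborhood where $|d|\phi||\neq 0$ is not needed. All the individual computations you cite ($|\beta|=|D\phi|/\rho$ from $PP^*=\rho^2\id$, the identification of $(\pi_W\circ\iota_{\Ker P})^*$ with $\pi_{\Ker P}\circ\iota_W$, and the bound $|P(\xi\otimes\psi)|^2\geq\epsilon|\xi|^2$) check out.
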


\begin{proof}
If $|d|\phi||(x) = 0$ the result is trivial, so by continuity we can assume that $|d|\phi|| \ne 0$ in a neighborhood of $x$. We take the one-form $\xi_0:=\frac{d|\phi|^2}{|d|\phi|^2|}$, which is defined in a neighborhood of $x$ and has norm $1$. Then
\begin{equation}\label{equ5} |d|\phi||\cdot|\phi|=\langle \nabla_{\xi_0^{\sharp}}\phi,\phi\rangle=\langle \nabla \phi,\xi_0\otimes \phi \rangle ,
\end{equation}
where $(\cdot)^{\sharp}$ is the musical isomorphism. The justification is immediate:
\begin{equation}\label{mine} 2|\phi|\cdot|d|\phi||=|d|\phi|^2|=\langle d|\phi|^2,\xi_0\rangle=\xi_0^{\sharp}|\phi|^2=2\langle \nabla_{\xi_0^{\sharp}}\phi,\phi \rangle=2\langle \nabla \phi,\xi_0\otimes \phi \rangle.
\end{equation}
The operator $\frac{1}{\rho}P:T^ *M\otimes E\ra F$ is an orthogonal projection. Then there exists a projection $\pi^{F^ {\perp}}:T^ *M\otimes E\ra F^ {\perp}$ onto the orthogonal complement of $P^ *(F)$, and so
\[T^ *M\otimes E=P^ *(F)\oplus F^ {\perp}.\]
Correspondingly, we have \[\nabla \phi=\frac{1}{\rho}D\phi + \pi^{F^{\perp}} (\nabla \phi),\]
and so 
\begin{equation}\label{ineq1}\langle \nabla \phi,\xi_0\otimes \phi \rangle=\frac{1}{\rho}\langle D \phi ,\xi_0\otimes \phi\rangle+\langle \nabla \phi,\pi^{F^ {\perp}}(\xi_0\otimes \phi)\rangle.\end{equation}
We have:
\[\frac{1}{\rho^ 2}|P(\xi_0\otimes \phi)|^ 2+|\pi^ {F^ {\perp}}(\xi_0\otimes \phi)|^ 2=|\xi_0\otimes \phi|^ 2=|\phi|^ 2.\]
But the injective ellipticity condition implies that
\[|P(\xi_0\otimes \phi)|^ 2\geq \epsilon |\xi_0|^ 2|\phi|^ 2=\epsilon|\phi|^2, \]
and therefore
\[ \left(1-\frac{\epsilon}{\rho^ 2}\right)|\phi|^2 \geq |\pi^ {F^ {\perp}}(\xi_0\otimes \phi)|^2.\]
Using (\ref{ineq1}) and the Cauchy-Schwarz inequality we get
\begin{equation}\label{equa6}  \sqrt{1-\frac{\epsilon}{\rho^ 2}}|\nabla \phi|\cdot|\phi|+\frac{1}{\rho}|D(\phi)|\cdot|\phi|\geq |\langle \nabla \phi,\xi_0\otimes \phi \rangle|.\end{equation}
Now (\ref{equa6}) with (\ref{equ5}) gives (\ref{fexK}).
\end{proof}

\section{Extended, quadratic Kato inequalities} \label{sect3}

In this section we prove Theorem \ref{thm:foldo}. We start with some linear algebra facts.

\begin{lemma}\label{alglemm1} Let $P:U \ra F$ be a conformal projection between inner product spaces and suppose the target space has an orthogonal decomposition $F=F_1\oplus F_2$. Then the components $P_1$ and $P_2$ of $P$ relative to this decomposition are also conformal projections with the same conformity factor as $P$.
\end{lemma}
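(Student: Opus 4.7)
The plan is very short because the result is essentially a functorial identity about adjoints. Let $\iota_i \colon F_i \hookrightarrow F$ denote the inclusion and $\pi_i \colon F \to F_i$ the orthogonal projection, so that $\pi_i \circ \iota_i = \id_{F_i}$ and, because $F_1 \perp F_2$, the adjoint of $\pi_i$ is $\iota_i$. By definition of the ``components'' of $P$ we have $P_i = \pi_i \circ P \colon U \to F_i$.

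Next I would check surjectivity: given $w_i \in F_i \subseteq F$, surjectivity of $P$ yields $u \in U$ with $P(u) = w_i$, and then $P_i(u) = \pi_i(w_i) = w_i$. So each $P_i$ is surjective.

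The main point is the identity $P_i P_i^* = \rho^2 \id_{F_i}$. Taking adjoints gives $P_i^* = P^* \circ \pi_i^* = P^* \circ \iota_i$, hence
\[
P_i P_i^* \;=\; \pi_i \, P P^* \, \iota_i \;=\; \pi_i \, (\rho^2 \id_F) \, \iota_i \;=\; \rho^2 \, \pi_i \iota_i \;=\; \rho^2 \id_{F_i},
\]
using that $P$ is conformal with factor $\rho$. This shows that each $P_i$ is a conformal projection with the same conformity factor $\rho$, which is exactly the claim.

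There is no real obstacle: the only ingredients are the orthogonality of the decomposition $F = F_1 \oplus F_2$ (so that $\pi_i^* = \iota_i$), surjectivity of $P$, and the conformal relation $PP^* = \rho^2 \id_F$. The whole argument is a one-line computation sandwiching $PP^*$ between $\pi_i$ and its adjoint.
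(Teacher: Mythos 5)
Your proof is correct and follows essentially the same route as the paper: both identify $P_i^* = P^*\circ\iota_i$ (i.e.\ $P^*$ restricted to $F_i$) and then sandwich $PP^* = \rho^2\id_F$ to get $P_iP_i^* = \rho^2\id_{F_i}$. Your explicit surjectivity check is a harmless extra (it also follows directly from $P_iP_i^*=\rho^2\id_{F_i}$ with $\rho>0$).
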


\begin{proof} It is straightforward to check that $P_i^ *:=P^ *\bigr|_{F_i}$ for $i=1,2$. Then for $f_i\in F_i$ we have
\[PP_i^ *(f_i)=PP^*(f_i)=\rho^ 2f_i\in F_i.\]
It follows that $PP_i^ *(f_i)=P_iP_i^ *(f_i)=\rho^2 f_i$.
\end{proof}

By analogy with the definition of the previous section, a linear operator $P:V\otimes E\ra F$ between inner product spaces is called injectively elliptic if $P_{\xi}:E\ra F$ is injective for every $\xi \in V\setminus \{0\}$, and the number
\[0<\epsilon:=\inf_{|\xi|=1}\inf_{|v|=1}\langle P_{\xi}^*P_{\xi}v,v\rangle\]
is called the ellipticity constant. 

In what follows we fix  an injectively elliptic conformal projection $P:V\otimes E\ra F$ with conformity factor $\rho$ and ellipticity constant $\epsilon$.

Let $L\subset V$ be a $1$-dimensional subspace. Let $F_1:=P(L\otimes E)$, let $F_2$ be the orthogonal complement of $F_1$ in $F$ and let $P=(P_1,P_2)$ be the components of $P$ relative this decomposition.
\begin{lemma} \label{alglem0}
\begin{itemize}
\item[(a)] Let $\tilde{P_1}:=P_1\bigr|_{L\otimes E}:L\otimes E\ra F_1$. Then $\inf\Spec(\tilde{P_1}\tilde{P_1}^ *)\geq \epsilon$.
\item[(b)] Let $W$ be the orthogonal complement of $L$ in $V$ and let $\hat{P_1}:=P_1\bigr|_{W\otimes E}:W \otimes E \ra F_1$. Then $\Spec(\hat{P_1}\hat{P_1}^ *)\subset [0,\rho^ 2-\epsilon]$.

\end{itemize}
\end{lemma}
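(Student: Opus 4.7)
The plan is to exploit the one-dimensionality of $L$ to reduce part (a) to a spectral statement about the single symbol $P_{\xi_0}$ for a unit vector $\xi_0 \in L$, and then to deduce part (b) from the block identity $P_1 P_1^* = \tilde{P_1}\tilde{P_1}^* + \hat{P_1}\hat{P_1}^*$ together with the fact (Lemma \ref{alglemm1}) that $P_1$ is itself a conformal projection of factor $\rho$.

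For part (a), I fix a unit vector $\xi_0 \in L$, so that $v \mapsto \xi_0 \otimes v$ provides an isometry $E \cong L \otimes E$. Under this identification $\tilde{P_1}$ becomes the surjection $P_{\xi_0} \colon E \to F_1 = P_{\xi_0}(E)$, and its adjoint $\tilde{P_1}^*$ becomes the restriction of $P_{\xi_0}^*$ to $F_1$. Any $f \in F_1$ can be written as $f = P_{\xi_0} u$ for some $u \in E$, so with $A := P_{\xi_0}^* P_{\xi_0}$ one has $\tilde{P_1}^* f = A u$. The ellipticity hypothesis applied to $\xi_0$ gives $A \geq \epsilon \id_E$, and since $A$ is self-adjoint and positive semidefinite, the functional calculus (or just squaring the inequality $A \geq \epsilon \id_E$ applied to the positive operator $A$) yields $A^2 \geq \epsilon A$ as operators. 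Hence
$$|\tilde{P_1}^* f|^2 = \langle A^2 u, u\rangle \geq \epsilon \langle A u, u\rangle = \epsilon |P_{\xi_0} u|^2 = \epsilon |f|^2,$$
which is precisely $\tilde{P_1}\tilde{P_1}^* \geq \epsilon \id_{F_1}$.

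For part (b), I decompose $V \otimes E = (L \otimes E) \oplus (W \otimes E)$, so that $P_1$ splits as the row operator $(\tilde{P_1}, \hat{P_1})$ and $P_1 P_1^* = \tilde{P_1}\tilde{P_1}^* + \hat{P_1}\hat{P_1}^*$. By Lemma \ref{alglemm1}, $P_1$ is a conformal projection with factor $\rho$, so $P_1 P_1^* = \rho^2 \id_{F_1}$, and part (a) then gives
$$\hat{P_1}\hat{P_1}^* = \rho^2 \id_{F_1} - \tilde{P_1}\tilde{P_1}^* \leq (\rho^2 - \epsilon)\id_{F_1},$$
while the lower bound $\hat{P_1}\hat{P_1}^* \geq 0$ is automatic from positivity. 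The only real obstacle is a clean transfer of the ellipticity estimate from $P_{\xi_0}^* P_{\xi_0}$ (acting on $E$) to $\tilde{P_1}\tilde{P_1}^*$ (acting on $F_1$); once one recognizes that every $f \in F_1$ factors through $P_{\xi_0}$ and invokes $A^2 \geq \epsilon A$, part (a) is immediate and part (b) is essentially formal bookkeeping using the conformal identity for $P_1$.
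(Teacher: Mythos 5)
Your proof is correct and follows essentially the same route as the paper: identify $L\otimes E$ with $E$ via a unit vector of $L$ so that $\tilde{P_1}$ becomes $P_{\xi_0}$, bound $\tilde{P_1}\tilde{P_1}^*$ below by $\epsilon$ using the ellipticity of $P_{\xi_0}^*P_{\xi_0}$, and deduce (b) from $P_1P_1^*=\tilde{P_1}\tilde{P_1}^*+\hat{P_1}\hat{P_1}^*=\rho^2\id_{F_1}$ via Lemma \ref{alglemm1}. The only cosmetic difference is in (a): the paper transfers the bound from $\tilde{P_1}^*\tilde{P_1}$ to $\tilde{P_1}\tilde{P_1}^*$ by citing the equality of their spectra for the isomorphism $\tilde{P_1}$, whereas you do the transfer by the direct computation $A^2\geq\epsilon A$; both are valid.
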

\begin{proof} For (a) we first note that $\inf \Spec (\tilde{P_1}^ *\tilde{P_1})\geq \epsilon$. This is because any choice of a norm $1$ vector $\xi$ of $L$ identifies isometrically $L\otimes E$ with $E$ and $\tilde{P_1}$ becomes $P_{\xi}$ after this identification. 

Since $P_{\xi}$ is injective we get that $\tilde{P_1}$ is an isomorphism from $L \otimes E$ to $F_1$. Hence, using a standard equality \[\inf\Spec(\tilde{P_1}\tilde{P_1}^*)=\inf\Spec(\tilde{P_1}^ *\tilde{P_1})\geq \epsilon. \]

For (b) we note that if $Q:V\otimes E\ra L\otimes E$ denotes the orthogonal projection then 
\[\tilde{P}_1\tilde{P}_1^ *=P_1QP_1^ *\quad\mbox{ and }\quad\hat{P}_1\hat{P}_1^ *=P_1(1-Q)P_1^ *=\rho^ 2\id_{F_1}-\tilde{P}_1\tilde{P}_1^ * ,\] 
where the last equality used Lemma \ref{alglemm1}. Since $\sup\Spec(-\tilde{P}_1\tilde{P}_1^ * )\leq -\epsilon$ we get the claim. 
\end{proof}

We will need the following linear algebra inequality, also used in the next section.

\begin{lemma}\label{alglem}  Let $C:U\ra Y$ be a linear operator between inner product spaces. Suppose $U_2\subset U$ is a subspace  and $\hat{C}:=C\bigr|_{U_2}:U_2\ra Y$ satisfies $\Spec(\hat{C}\hat{C}^ *)\in [0,a]$ for some $a>0$. Let $u_2\in U_2$ and $u_1\in U$. Fix $c\geq 0$. Then
\begin{equation}\label{eq01}|u_2|^ 2+ c|C(u_1+u_2)|^ 2\geq \tilde{c}|C(u_1)|^ 2\end{equation}
where
\[ \tilde{c}=\left\{\begin{array}{ccc} \frac{1}{a} & \mbox {if} & C(u_1+u_2)=0 , \\
  \frac{c}{1+ac} &\mbox{if} & C(u_1+u_2)\neq 0 . \end{array}\right. \]

\end{lemma}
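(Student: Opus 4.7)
The plan is to split the argument on whether $C(u_1+u_2)$ vanishes. In either case I would use the elementary observation that since $\hat{C}\hat{C}^{*}$ and $\hat{C}^{*}\hat{C}$ share the same nonzero spectrum, the hypothesis $\Spec(\hat{C}\hat{C}^{*})\subset[0,a]$ forces $|\hat{C}(u)|^{2}=\langle \hat{C}^{*}\hat{C}u,u\rangle\leq a|u|^{2}$ for every $u\in U_{2}$.

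If $C(u_{1}+u_{2})=0$, then $C(u_{1})=-C(u_{2})=-\hat{C}(u_{2})$, and the preliminary bound immediately gives $|u_{2}|^{2}\geq (1/a)|\hat{C}(u_{2})|^{2}=(1/a)|C(u_{1})|^{2}$, which is (\ref{eq01}) with $\tilde{c}=1/a$.

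If instead $C(u_{1}+u_{2})\neq 0$ (and assuming $c>0$, since the case $c=0$ forces $\tilde{c}=0$ and the inequality is trivial), the triangle inequality together with the spectral bound yields
\[|C(u_{1})|\leq |C(u_{1}+u_{2})|+|\hat{C}(u_{2})|\leq A+\sqrt{a}\,B,\]
where $A:=|C(u_{1}+u_{2})|$ and $B:=|u_{2}|$. Hence it suffices to show $B^{2}+cA^{2}\geq \frac{c}{1+ac}(A+\sqrt{a}\,B)^{2}$. Multiplying through by $1+ac$, expanding, and cancelling, this is equivalent to $B^{2}+ac^{2}A^{2}\geq 2c\sqrt{a}\,AB$, which is precisely the AM-GM inequality applied to $B$ and $c\sqrt{a}\,A$.

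The hard part, in my view, is spotting this algebraic reformulation: the constant $c/(1+ac)$ is chosen exactly so that the cross term arising from the square $(A+\sqrt{a}\,B)^{2}$ is absorbable by AM-GM, with equality when $B=c\sqrt{a}\,A$. Once this is identified, the verification is mechanical, and the asymmetry between $u_{1}$ and $u_{2}$ in the hypothesis (only $u_{2}$ is constrained to lie in $U_{2}$) enters only through the triangle inequality and the spectral bound on the restriction $\hat{C}$.
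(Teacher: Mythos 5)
Your proof is correct and follows essentially the same route as the paper: both arguments first establish the bound $|C(u_2)|^2\le a|u_2|^2$ from the spectral hypothesis and then absorb the cross term with exactly the constant $c/(1+ac)$ --- the paper via a Peter--Paul inequality with parameter $\mu=1/(1+ac)$, you via the triangle inequality followed by completing a perfect square, which is the same computation in scalar form. Your derivation of the preliminary bound, via the equality of the nonzero spectra of $\hat{C}\hat{C}^{*}$ and $\hat{C}^{*}\hat{C}$, is a slightly slicker substitute for the paper's decomposition of $u_2$ into $\Imag\hat{C}^{*}\oplus\Ker\hat{C}$, but it is not a different approach.
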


\begin{proof} First we have the inequality
\begin{equation} \label{eq02}a|u_2|^ 2\geq |C(u_2)|^ 2,\qquad \forall u_2\in U_2. \end{equation}
Indeed  $u_2\in U_2$ can be written as $u_2:=u_2'+u_2''$ where $u_2' \in \Imag \hat{C}^ *$ and $u_2''\in \Ker\hat{C}$. Then there exists $y\in Y$ such that $\hat{C}^ *(y)=u_2'$. Hence
\[ |C(u_2)|^ 2=|\hat{C}(u_2)|^ 2=|\hat{C}(u_2')|^ 2=|\hat{C}\hat{C}^ *(y)|^ 2=\langle (\hat{C}\hat{C}^ *)^ 2y,y\rangle\leq\]\[\leq  a\langle \hat{C}\hat{C}^* (y),y\rangle=a|\hat{C}^ *(y)|^ 2=a|u_2'|^ 2\leq a|u_2|^ 2.\]

Note that (\ref{eq02}) already implies (\ref{eq01}) when $C(u_1)=-C(u_2)$.

In general, for every $\mu\in (0,1]$, the  Peter-Paul inequality gives
\[|C(u_2)|^ 2= |C(u_1+u_2)-C(u_1)|^ 2\geq (1-\mu) |C(u_1)|^ 2+(1-\frac{1}{\mu})|C(u_1+u_2)|^ 2.\]
Hence, using (\ref{eq02}) we get
\[ |u_2|^ 2+ \frac{1}{a}\left(\frac{1}{\mu}-1\right)|C(u_1+u_2)|^ 2\geq \frac{1-\mu}{a}|C(u_1)|^ 2.\]
Let $c:=\frac{1}{a}\left(\frac{1}{\mu}-1\right)$ and note that $\frac{1-\mu}{a}=\tilde{c}$.
\end{proof}
We return to the injectively elliptic conformal projection $P:V\otimes E\ra F$.

\begin{corollary}\label{cord} Let $u=u_1+u_2$ in the orthogonal decomposition $L\otimes E\oplus W\otimes E$ and as before let $P=(P_1,P_2)$. Fix $c>0$.  Then
\begin{itemize}\item[(a)] We have \begin{equation}\label{eq03}|u_2|^ 2+c|P_1(u)|^ 2\geq \tilde{c}\epsilon|u_1|^ 2, \end{equation}
where 
\[\tilde{c}:=\left\{\begin{array}{ccc} \frac{1}{\rho^ 2-\epsilon}&\mbox{if} & P_1(u)=0 , \\
  \frac{c}{c(\rho^ 2-\epsilon)+1}& \mbox{if}&P_1(u)\neq 0 . \end{array}\right.\]
 \item[(b)] We have \[|u|^ 2+c|P(u)|^ 2\geq (1+\epsilon \tilde{c})|u_1|^ 2 ,\]
where 
\[\tilde{c}:=\left\{\begin{array}{ccc} \frac{1}{\rho^ 2-\epsilon}&\mbox{if} & P(u)=0 , \\
  \frac{c}{c(\rho^ 2-\epsilon)+1}& \mbox{if}& P(u)\neq 0 . \end{array}\right.\]
\end{itemize}

\end{corollary}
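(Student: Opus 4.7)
The plan is to read off (a) as a direct application of Lemma \ref{alglem} combined with the injective ellipticity of $P$, and then to deduce (b) from (a) by bounding $|P(u)|^2$ below by $|P_1(u)|^2$. For (a), I would apply Lemma \ref{alglem} with $C := P_1$, $U := V\otimes E$, $U_2 := W\otimes E$ and $Y := F_1$. Then $\hat{C}$ becomes $\hat{P_1}$, and Lemma \ref{alglem0}(b) supplies the valid choice $a := \rho^2-\epsilon$. The lemma then yields
\[|u_2|^2 + c\,|P_1(u_1+u_2)|^2 \geq \tilde{c}\,|P_1(u_1)|^2,\]
with exactly the constant $\tilde{c}$ stated in (a) (whose case distinction matches that of Lemma \ref{alglem}).

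To finish (a), I need the lower bound $|P_1(u_1)|^2\geq \epsilon|u_1|^2$. Since $u_1\in L\otimes E$, we have $P_1(u_1)=\tilde{P_1}(u_1)$; Lemma \ref{alglem0}(a) together with the fact that $\tilde{P_1}$ is an isomorphism gives $\inf\Spec(\tilde{P_1}^*\tilde{P_1})\geq \epsilon$, and hence $|\tilde{P_1}(u_1)|^2=\langle\tilde{P_1}^*\tilde{P_1}u_1,u_1\rangle\geq\epsilon|u_1|^2$. Combining this with the previous display gives (a).

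For (b), I would use the orthogonal decomposition $F=F_1\oplus F_2$ to write $|P(u)|^2=|P_1(u)|^2+|P_2(u)|^2\geq|P_1(u)|^2$. Together with $|u|^2=|u_1|^2+|u_2|^2$ and (a) applied to the pair $(u_1,u_2)$, this yields
\[|u|^2+c\,|P(u)|^2\geq |u_1|^2+\bigl(|u_2|^2+c\,|P_1(u)|^2\bigr)\geq (1+\tilde{c}_1\epsilon)\,|u_1|^2,\]
where $\tilde{c}_1$ is the constant furnished by (a). I expect the main (and only) subtlety to be matching the case analyses: $\tilde{c}_1$ distinguishes on whether $P_1(u)=0$, while the $\tilde{c}$ of (b) distinguishes on whether $P(u)=0$. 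When $P(u)=0$ then $P_1(u)=0$ and both equal $\frac{1}{\rho^2-\epsilon}$; when $P(u)\neq 0$ and $P_1(u)\neq 0$ the two constants agree; and when $P(u)\neq 0$ but $P_1(u)=0$, one has $\tilde{c}_1=\frac{1}{\rho^2-\epsilon}>\frac{c}{c(\rho^2-\epsilon)+1}=\tilde{c}$, so the inequality $(1+\tilde{c}_1\epsilon)|u_1|^2\geq (1+\tilde{c}\epsilon)|u_1|^2$ is automatic. This bookkeeping delivers (b).
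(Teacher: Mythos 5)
Your proposal is correct and follows essentially the same route as the paper: Lemma \ref{alglem} applied with $C=P_1$, $U_2=W\otimes E$ and $a=\rho^2-\epsilon$ from Lemma \ref{alglem0}(b), the ellipticity bound $|P_1(u_1)|^2\geq\epsilon|u_1|^2$ on $L\otimes E$, and then (b) via $|P(u)|^2\geq|P_1(u)|^2$ with the same case-matching observation that $\frac{1}{\rho^2-\epsilon}>\frac{c}{c(\rho^2-\epsilon)+1}$ handles the case $P_1(u)=0$, $P(u)\neq 0$. The only cosmetic difference is that you extract the ellipticity bound from the spectral statement of Lemma \ref{alglem0}(a), whereas the paper invokes the definition of $\epsilon$ directly via $|P_1(u_1)|=|P(u_1)|$; these amount to the same thing.
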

\begin{proof} For (a) we use  Lemma  \ref{alglem}  for $C:=P_1$, $U:=V\otimes E$ and $U_2:=W\otimes E$. By Lemma \ref{alglem0}, we have $\Spec(\hat{C}\hat{C}^ *)\subset [0,\rho^ 2-\epsilon]$. We thus have 
\[|u_2|^ 2+c|P_1(u_1+u_2)|^ 2\geq \tilde{c}|P_1(u_1)|^ 2.\]
But, since $\epsilon$ is the ellipticity constant we have
\[|P_1(u_1)|^ 2=|P(u_1)|^2\geq\epsilon |u_1|^ 2,\qquad\forall u_1\in L\otimes E. \]
This gives (a).

Then (b) is obtained from (a) by adding $|u_1|^ 2$ to both sides and noting two facts:
\begin{itemize}
\item[(1)] $|P(u)|^2\geq |P_1(u)|^ 2$. 
\item[(2)] $\frac{1}{\rho^ 2-\epsilon}> \frac{c}{c(\rho^ 2-\epsilon)+1}$ for all $c\geq 0$. Hence, if $P_1(u)=0$ but $P(u)\neq 0$,  one has by (\ref{eq03})
\[|u_2|^ 2+c|P(u)|^2\geq |u_2|^ 2+c|P_1(u)|^ 2\geq \epsilon \frac{1}{\rho^ 2-\epsilon} |u_1|^ 2\geq \frac{\epsilon c}{c(\rho^ 2-\epsilon)}|u_1|^ 2.\]
\end{itemize}
\end{proof}

We return now to first order differential operators $D:\Gamma(E)\ra \Gamma(F)$ which are conformal and injectively elliptic.  
\begin{lemma} \label{cord2} Let $\xi\in T^*_xM$ with $|\xi|=1$ and let $\phi\in \Gamma(E)$ be a section. Then $\xi\otimes \nabla_{\xi^{\sharp}}\phi$ is the orthogonal projection of $(\nabla \phi)_x\in T^ *_xM\otimes E_x$ onto $L\otimes E_x$ where $L=\langle \xi\rangle$. 
\end{lemma}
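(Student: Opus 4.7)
The plan is to prove this by choosing an orthonormal basis of $T^*_xM$ compatible with the line $L$ and then reading off the projection directly from the coordinate expression of $\nabla\phi$.

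First I would complete $\xi$ to an orthonormal basis $\{\xi = e_1^*, e_2^*, \ldots, e_n^*\}$ of $T^*_xM$, with dual vectors $\{e_1, \ldots, e_n\}$; note that since $|\xi|=1$ and $e_1^* = \xi$, the musical isomorphism gives $\xi^\sharp = e_1$. Then I would express the covariant derivative at $x$ in the standard way,
\[
(\nabla \phi)_x = \sum_{i=1}^{n} e_i^* \otimes \nabla_{e_i}\phi,
\]
which is an element of $T^*_xM \otimes E_x$.

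Next, using the canonical orthogonal decomposition $T^*_xM \otimes E_x = (L \otimes E_x) \oplus (L^\perp \otimes E_x)$ induced by $L = \langle \xi \rangle$ and its orthogonal complement $L^\perp = \langle e_2^*, \ldots, e_n^*\rangle$, the orthogonal projection onto $L \otimes E_x$ simply selects the $i=1$ summand, yielding $e_1^* \otimes \nabla_{e_1}\phi = \xi \otimes \nabla_{\xi^\sharp}\phi$. This gives the claimed identity.

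The main (and only) subtlety to verify carefully is that the tensor product inner product is such that $\{e_i^* \otimes s : 1 \le i \le n,\, s \text{ in an o.n.\ basis of } E_x\}$ is orthonormal, so that the projection genuinely acts coordinatewise on the first factor. Once that is noted, the statement is immediate and independent of the choice of basis of $L^\perp$, so there is no real obstacle.
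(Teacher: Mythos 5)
Your proof is correct and follows essentially the same route as the paper: both express $(\nabla\phi)_x$ in an orthonormal coframe and observe that the orthogonal projection onto $L\otimes E_x$ acts only on the $T^*_xM$ factor; the only cosmetic difference is that you adapt the basis so that $e_1^*=\xi$, whereas the paper uses a general basis and computes $P_L(de_i)=\langle de_i,\xi\rangle\,\xi$ before summing. Nothing further is needed.
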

\begin{proof} For the orthonormal basis $\{e_1,e_2,\ldots,e_n\}$ of $T_x M$ with dual basis $\{de_1,de_2\ldots,de_n\}$ one writes
\[\nabla \phi=\sum_{i=1}^ n de_i\otimes \nabla_{e_i} \phi . \]
 The orthogonal projection onto $L\otimes E$ inside $T_x^*M\otimes E$ is $P_L\otimes \id_S$ where $P_L:T_x^*M\ra T_x^*M$ is the orthogonal projection onto $L$.

Then
\[P_{L\otimes E}(\nabla \phi)=\sum_{i=1}^ n P_{L}(de_i)\otimes \nabla_{e_i}\phi=\sum_{i= 1}^ n (\langle de_i,\xi\rangle \xi)\otimes \nabla_{e_i}\phi=\xi \otimes\nabla_{\xi^ {\sharp}}\phi.\]
\end{proof}

We use Corollary \ref{cord} in order to get the next result.
\begin{theorem} Let $D :\Gamma(E)\ra \Gamma(F)$ be a conformal, injectively elliptic, first order differential operator between Riemannian vector bundles $E,F \ra M$ with ellipticity constant $\epsilon$ and conformity factor $\rho$. Fix a constant $c\geq 0$. Let $\phi$
be a section of $E$ and let $x$ be a point in $M$ such that $\phi(x)\neq0$.
Then at $x$ the following pointwise inequality holds
\begin{equation}
|\nabla\phi|^{2}+c|D\phi|^{2}\geq\left(1+\tilde{c}\right)|d|\phi||^{2},
\end{equation} 
where
\[
\tilde{c}=\begin{cases}
\frac{\epsilon}{\rho^2-\epsilon} & if\,\,\,\left(D\phi\right)_{x}=0,\\
\frac{\epsilon c}{1+\left(\rho^2-\epsilon\right)c} & if\,\,\,\left(D\phi\right)_{x}\neq0.
\end{cases}
\]
\end{theorem}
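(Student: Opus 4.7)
The plan is to reduce the pointwise inequality to the linear algebra statement of Corollary \ref{cord}(b), applied fiberwise at $x$ with $P := \sigma(D)_x$ and $u := (\nabla\phi)_x$. Since $D = \sigma(D)\circ\nabla$, one has $P(u) = (D\phi)_x$, so the dichotomy $P(u)=0$ vs $P(u)\neq 0$ matches the dichotomy in the statement of the theorem.

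First I would dispose of the trivial case: if $|d|\phi||(x)=0$ the inequality $|\nabla\phi|^2+c|D\phi|^2\geq (1+\tilde c)\cdot 0$ is obvious. So assume $|d|\phi||(x)\neq 0$, and on a neighborhood of $x$ set $\xi_0 := d|\phi|/|d|\phi||$, a unit covector. Let $L := \langle \xi_0(x)\rangle\subset T^*_xM$ and $W := L^\perp$, and decompose $(\nabla\phi)_x = u_1+u_2$ in the orthogonal splitting $L\otimes E_x\oplus W\otimes E_x$. Lemma \ref{cord2} identifies $u_1 = \xi_0\otimes \nabla_{\xi_0^{\sharp}}\phi$.

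The next ingredient is the classical pointwise Kato lower bound on $|u_1|$. From the identity already used in the proof of Theorem \ref{Dinjelliptic},
\[
|d|\phi||\cdot|\phi| = \langle \nabla_{\xi_0^{\sharp}}\phi,\phi\rangle = \langle \nabla\phi,\xi_0\otimes\phi\rangle = \langle u_1,\xi_0\otimes\phi\rangle,
\]
where the last equality uses that $u_2\perp L\otimes E_x\ni \xi_0\otimes\phi$. Combining with Cauchy--Schwarz and $|\xi_0\otimes\phi|=|\phi|$ gives $|u_1|\geq |d|\phi||$.

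Now I apply Corollary \ref{cord}(b) to $P=\sigma(D)_x$ and $u=(\nabla\phi)_x$. Noting that $|u|^2 = |\nabla\phi|^2$ and $|P(u)|^2 = |D\phi|^2$, the corollary yields
\[
|\nabla\phi|^2 + c\,|D\phi|^2 \;\geq\; \bigl(1+\epsilon\,\tilde c_{\mathrm{cor}}\bigr)|u_1|^2 \;\geq\; (1+\tilde c)\,|d|\phi||^2,
\]
where $\tilde c_{\mathrm{cor}}$ is the constant in Corollary \ref{cord}(b) and $\tilde c = \epsilon\,\tilde c_{\mathrm{cor}}$ coincides exactly with the expression in the theorem in both cases $P(u)=0$ and $P(u)\neq 0$. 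There is no serious obstacle here: all the analytic content is already packaged into Corollary \ref{cord}(b) (which in turn rests on the Peter--Paul inequality together with the spectral bound $\mathrm{Spec}(\hat P_1\hat P_1^*)\subset[0,\rho^2-\epsilon]$ from Lemma \ref{alglem0}(b)), and the only genuinely differential-geometric step is the identification of $u_1$ and the bound $|u_1|\geq |d|\phi||$, both of which are already at hand.
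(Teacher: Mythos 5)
Your proposal is correct and follows essentially the same route as the paper's proof: both reduce the statement to Corollary \ref{cord}(b) applied fiberwise to $u=(\nabla\phi)_x$ with $L$ spanned by the unit covector $\xi_0$, identify the component $u_1=\xi_0\otimes\nabla_{\xi_0^{\sharp}}\phi$ via Lemma \ref{cord2}, and conclude with the Cauchy--Schwarz bound $|\nabla_{\xi_0^{\sharp}}\phi|\geq|d|\phi||$. The only difference is cosmetic: you make explicit the orthogonality step $\langle\nabla\phi,\xi_0\otimes\phi\rangle=\langle u_1,\xi_0\otimes\phi\rangle$, which the paper leaves implicit.
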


\begin{proof} As in the proof of Theorem \ref{Dinjelliptic} we can assume that $d|\phi|^2\neq 0$ in a neighborhood of $x$. Let $u =(\nabla \phi)_x$, $\xi_0:=\frac{d|\phi|^2}{|d|\phi|^2|}(x)\in T^*_xM=V$,  $L=\langle\xi_0\rangle$  and $P$ the symbol of the operator $D$ at $x$. Then Corollary \ref{cord} together with Lemma \ref{cord2} gives
\[ |\nabla \phi|^ 2+ c|D(\phi)|^ 2\geq (1+\tilde{c})|\nabla_{\xi_0^ {\sharp}}\phi|^ 2 .\] 
But note that
\[|\nabla_{\xi_0^ {\sharp}}\phi|\geq|d|\phi|| , \]
as follows from the Cauchy-Schwarz applied to 
 \[ 2|\phi|\cdot|d|\phi||=|d|\phi|^2|=\langle d|\phi|^2,\xi_0\rangle=\xi_0^{\sharp} |\phi|^2=2\langle \nabla_{\xi_0^{\sharp}}\phi,\phi \rangle .
 \]
\end{proof}

\section{Extended Kato for the Hodge-de Rham operator} \label{sect4}

In this section we prove Theorem \ref{thm:hodge}, that is we turn our attention to $D=d+d^ *$. In fact, we will consider  the twisted version
 \[D:=d^{\nabla}+(d^ {\nabla})^ *:\Gamma(\Lambda^kT^ *M\otimes E)\ra \Gamma(\Lambda^ {k+1}T^ *M\otimes E\oplus \Lambda^ {k-1}T^ *M\otimes E) , \]
where 
\[d^{\nabla}(\omega\otimes s):=d\omega\otimes s+(-1)^{|\omega|} \omega\wedge \nabla^E s , \]
and $(d^{\nabla})^*$ is its formal adjoint.
 \begin{lemma}\label{nL} Let $\nabla^ {\otimes}:=\nabla^{LC}\otimes \id_E+\id_{\Lambda^kT^*M}\otimes \nabla^E$ be the connection on $\Lambda^kT^ *M\otimes E$. Then
\begin{equation}\label{eq001} (\epsilon\otimes \id_E)\circ \nabla^ {\otimes}=d^{\nabla} , \end{equation}
\begin{equation}\label{eq002} (-\iota\otimes \id_E)\circ \nabla^ {\otimes}=(d^{\nabla})^*,\end{equation}
where $\epsilon:T^*M\otimes \Lambda^kT^ *M\ra \Lambda^{k+1}T^ *M$ and $\iota:T^*M\otimes \Lambda^kT^ *M\ra\Lambda^{k-1}T^ *M$ are exterior and interior multiplication operators.
\end{lemma}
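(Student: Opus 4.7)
Both identities are tensorial, so my plan is to verify them at an arbitrary point $x\in M$ evaluated on a decomposable section $\omega\otimes s\in\Gamma(\Lambda^kT^*M\otimes E)$, and extend by linearity. I will fix a local orthonormal frame $\{e_i\}$ of $TM$ around $x$ that is synchronous at $x$, meaning $\nabla^{LC}_{e_i}e_j(x)=0$ for all $i,j$, which forces $\nabla^{LC}_{e_i}e^j(x)=0$ as well. The benefit of synchrony is that at $x$ the operations $e^i\wedge(\cdot)$ and $\iota_{e_i}$ commute with $\nabla^{LC}_{e_i}$, so no cross terms from the frame's rotation appear.

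For (\ref{eq001}), I will expand
\[
\nabla^{\otimes}(\omega\otimes s) = \sum_i e^i\otimes \bigl(\nabla^{LC}_{e_i}\omega\otimes s + \omega\otimes \nabla^E_{e_i}s\bigr)
\]
and apply $\epsilon\otimes\id_E$. The first resulting summand, $\sum_i (e^i\wedge\nabla^{LC}_{e_i}\omega)\otimes s$, collapses to $d\omega\otimes s$ by the classical frame formula for the exterior derivative. The second summand, $\sum_i(e^i\wedge\omega)\otimes\nabla^E_{e_i}s$, becomes $(-1)^{|\omega|}\omega\wedge\nabla^E s$ after moving each $e^i$ past $\omega$ and reassembling $\nabla^E s=\sum_i e^i\otimes\nabla^E_{e_i}s$. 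Their sum is precisely the defining expression of $d^{\nabla}(\omega\otimes s)$.

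For (\ref{eq002}), my main approach is to combine (\ref{eq001}) applied one degree lower (so that $d^{\nabla}=(\epsilon\otimes\id_E)\circ\nabla^{\otimes}$ on $\Gamma(\Lambda^{k-1}T^*M\otimes E)$) with the composition rule for formal adjoints: $(d^{\nabla})^*=(\nabla^{\otimes})^*\circ(\epsilon^*\otimes\id_E)$. Using the pointwise identity $\epsilon^*(\omega)=\sum_i e^i\otimes\iota_{e_i}\omega$ from Example \ref{firstex}(2), and the standard synchronous-frame expression $(\nabla^{\otimes})^*\bigl(\sum_i e^i\otimes\tau_i\bigr)(x) = -\sum_i \nabla^{\otimes}_{e_i}\tau_i(x)$ for the adjoint of a metric connection, I can commute $\nabla^{LC}_{e_i}$ through $\iota_{e_i}$ at $x$ to reduce $(d^{\nabla})^*(\omega\otimes s)|_x$ to $-\sum_i (\iota_{e_i}\nabla^{LC}_{e_i}\omega)\otimes s -\sum_i \iota_{e_i}\omega\otimes\nabla^E_{e_i}s$. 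Expanding $\nabla^{\otimes}(\omega\otimes s)$ and applying $-\iota\otimes\id_E$ directly gives the same expression term by term. Alternatively, (\ref{eq002}) can be obtained by direct integration by parts against compactly supported test sections in $\Gamma(\Lambda^{k-1}T^*M\otimes E)$ via the Leibniz rule and Stokes' theorem.

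There is no conceptual obstacle; the work is bookkeeping. The two items demanding care are the sign $(-1)^{|\omega|}$ in the Leibniz term of (\ref{eq001}), and the potential confusion between the contraction $\iota:T^*M\otimes\Lambda^kT^*M\to\Lambda^{k-1}T^*M$ appearing in the statement (which contracts the first slot into the $k$-form) and the formal adjoint of exterior multiplication (which extracts a one-form from a $(k+1)$-form). The local identity $\epsilon^*(\omega)=\sum_i e^i\otimes\iota_{e_i}\omega$ reconciles these two viewpoints and is what makes the adjoint computation in (\ref{eq002}) collapse to the stated form.
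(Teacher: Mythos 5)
Your proof of (\ref{eq001}) is correct and coincides with the paper's: expand $\nabla^{\otimes}(\omega\otimes s)$, use the frame formula $\epsilon(\nabla^{LC}\omega)=d\omega$, and track the sign $(-1)^{|\omega|}$ coming from moving the $T^*M$ factor past $\omega$. For (\ref{eq002}) your primary route is genuinely different from the paper's. The paper verifies adjointness directly: it writes down $(-\iota\otimes\id_E)\circ\nabla^{\otimes}(\omega\otimes s)=d^*\omega\otimes s-\sum_i\iota_{e_i}\omega\otimes\nabla^E_{e_i}s$, pairs this against a test section $\eta\otimes s_1$, and integrates by parts (using that $d^*$ is the adjoint of $d$ and the compatibility of $\nabla^E$ with the metric) to exhibit the pairing with $d^{\nabla}(\eta\otimes s_1)$; the two correction terms $\langle[d\langle s_1,s_2\rangle]\wedge\eta,\omega\rangle$ cancel upon adding the two integrated identities. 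You instead dualize (\ref{eq001}) one degree lower, writing $(d^{\nabla})^*=(\nabla^{\otimes})^*\circ(\epsilon^*\otimes\id_E)$, and reduce everything to the pointwise identity $\epsilon^*(\omega)=\sum_i e^i\otimes\iota_{e_i}\omega$ together with the synchronous-frame divergence formula for $(\nabla^{\otimes})^*$. This is valid and arguably cleaner, since it confines the global integration by parts to the single standard fact $\nabla^*\bigl(\sum_i e^i\otimes\tau_i\bigr)=-\sum_i\nabla_{e_i}\tau_i$ at a synchronous point and makes the rest a purely pointwise commutation; the cost is that you must justify that the formal adjoint of the composition of a bundle map with a differential operator is the reverse composition of the pointwise adjoint with the formal adjoint, which is routine. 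Your fallback (direct Stokes against compactly supported test sections) is exactly the paper's argument, so either way the lemma is established; no gap.
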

\begin{proof} We have
 \[ (\epsilon\otimes \id_E)(\nabla^{LC}\omega\otimes s+\omega\otimes \nabla^E s)=\frac{1}{\sqrt{k+1}}\left[\epsilon(\nabla^{LC}(\omega))\otimes s+(-1)^{|\omega|}\omega\wedge \nabla^{E}s\right].
 \]
 The reason for the sign $(-1)^{|\omega|}$ is that $\omega\wedge \nabla^{E}s\in \Lambda^kT^*M\otimes T^*M\otimes E$ while the operation $\epsilon$ wedges the $\Lambda^kT^*M$ and $T^*M$ of the tensor product but with the $T^*M$ component first. It is well known (e.g. see Prop 1.22 in \cite{BGV}) that
 \[\epsilon(\nabla^{LC}(\omega))=d\omega ,
 \]
and this proves (\ref{eq001}).  For (\ref{eq002}) fix a local  basis $\{ e_i\}$ for $T_xM$ with dual basis $de_i$. Then:
 \[ \nabla s=\sum_{i=1}^nde_i\otimes\nabla_{e_i}s ,
 \]
 and we can write (see also Prop 2.8 in \cite{BGV}):
 \[ (-\iota\otimes\id_E)(\nabla^{LC}\omega\otimes s+\omega\otimes \nabla^E s)=d^*\omega\otimes s-\sum_{i=1}^n\iota_{e_i}\omega\otimes \nabla_{e_i}^Es.
 \]
Notice that
 \begin{equation} \label{eq300} \left\langle\eta\otimes s_1,-\sum_{i=1}^n\iota_{e_i}\omega\otimes \nabla_{e_i}^Es_2\right\rangle=-\sum_{i=1}^n\langle de_i\wedge\eta,\omega\rangle\langle s_1,\nabla_{e_i}^Es_2\rangle=\end{equation}\[=\sum_{i=1}^n\langle de_i\wedge\eta,\omega\rangle \langle\nabla_{e_i}^Es_1,s_2\rangle-\langle [d\langle s_1,s_2\rangle]\wedge\eta,\omega \rangle=
 \]
 \[=\sum_{i=1}^n\langle de_i\wedge\eta \otimes \nabla^E_{e_i}s_1,\omega\otimes s_2\rangle- \langle [d\langle s_1,s_2\rangle]\wedge\eta,\omega \rangle=
 \]
 \[=(-1)^{|\eta|}\langle\eta\wedge\nabla^Es_1,\omega\otimes s_2\rangle- \langle [d\langle s_1,s_2\rangle]\wedge\eta,\omega \rangle.
 \]
 On the other hand, for sections with compact support since $d^*$ is the formal adjoint of $d$ we can write:
 \begin{equation}\label{eq400} \int_M \langle \eta\otimes s_1,d^*\omega\otimes s_2 \rangle=\int_M\langle s_1,s_2\rangle\langle \eta,d^*\omega\rangle=\int_{M} \langle d[\langle s_1,s_2 \rangle \cdot \eta ],\omega\rangle=\end{equation}\[=\int_M\langle d\eta \otimes s_1,\omega \otimes s_2\rangle+\int_M \langle [d\langle s_1,s_2\rangle]\wedge\eta,\omega \rangle.
 \]
 Integrating $(\ref{eq300})$ and adding it to $(\ref{eq400})$ proves that $(-\iota\otimes \id_E)\circ \nabla$ is the formal adjoint of $d^{\nabla}$ and therefore equals $(d^{\nabla})^*$.
\end{proof}

For ease of notation, we will denote  $d^{\nabla}$ by $d$ and $(d^{\nabla})^ *$ by $d^ *$ for the rest of this note.

If $T^ *M=L\otimes W$ is an orthogonal splitting where $L$ is a line bundle we correspondingly have

\[\Lambda^{k}T^*M=(L\wedge \Lambda^{k-1}W)\oplus \Lambda^{k}W ,
\]
where  $L\wedge \Lambda^{k-1}W\simeq L\otimes \Lambda^{k-1}W$. The first one is just the description "internal" to $\Lambda^{k}T^*M$ of the second one.

Consequently we have an orthogonal  decomposition of $T^*M\otimes \Lambda^{k}T^*M\otimes E$ into $4$ spaces
\begin{equation}\label{equ8} \underbrace{L\otimes (L\wedge \Lambda^{k-1} W)\otimes E}_{V_{11}} ~\oplus~ \underbrace{L\otimes \Lambda^{k}W\otimes E}_{V_{12}}~\oplus~ \underbrace{W\otimes (L\wedge \Lambda^{k-1}W)\otimes E}_{V_{21}}~\oplus~ \underbrace{W\otimes \Lambda^{k}W\otimes E}_{V_{22}}.
\end{equation}

 For ease of notation we will denote $\epsilon \otimes\id_E=:\epsilon^ E$ and $\iota\otimes \id_E=:\iota^ E$. 

  The next two results are straightforward checks and the proofs will be omitted. 

\begin{lemma}\label{declem} The following inclusions hold
\begin{eqnarray} \iota^E(V_{11})\subset \Lambda^{k-1}W\otimes E;\qquad \iota^E(V_{12})=0;\qquad \qquad\\ \iota^E(V_{21})\subset L\wedge \Lambda^{k-2}W\otimes E;\qquad \iota^E(V_{22})\subset \Lambda^{k-1}W\otimes E;  \\
\epsilon^E(V_{11})=0;\qquad \epsilon^E(V_{12})\subset L\wedge \Lambda^kW\otimes E;\qquad \qquad\\
 \epsilon^E(V_{21})\subset L\wedge\Lambda^kW\otimes E;\qquad \epsilon^E(V_{22})\subset \Lambda^{k+1}W\otimes E.
\end{eqnarray}

\end{lemma}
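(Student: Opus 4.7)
The plan is to verify each of the eight (in)equalities by evaluating $\epsilon^E$ and $\iota^E$ on decomposable tensors $\xi\otimes \omega\otimes s$ in $T^*M\otimes \Lambda^kT^*M\otimes E$; by linearity this suffices. Since both operators act as the identity on the $E$-factor, all checks reduce to statements about $\epsilon$ and $\iota$ applied to elements $\xi\otimes \omega$ with $\xi\in L$ or $\xi\in W$, and $\omega$ lying either in $L\wedge \Lambda^{k-1}W$ or in $\Lambda^kW$. Throughout, I would invoke two orthogonality consequences of the splitting $T^*M=L\oplus W$: the interior product $\iota_{\ell^\sharp}$ with $\ell\in L$ annihilates every form built entirely from $W$, while $\iota_{w^\sharp}\ell=0$ for $w\in W$.

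For the exterior-multiplication inclusions I would handle $V_{11}$ first: on a generator $\ell\otimes(\ell'\wedge\eta)\otimes s$ the image is $\ell\wedge\ell'\wedge\eta\otimes s$, which vanishes because $L$ is one-dimensional and hence $\ell\wedge\ell'=0$. The inclusions for $V_{12}$ and $V_{22}$ are immediate from the definition of $\epsilon$, since $\ell\wedge\omega$ lies tautologically in $L\wedge\Lambda^kW$ for $\omega\in\Lambda^kW$, and $w\wedge\omega$ lies in $\Lambda^{k+1}W$. For $V_{21}$, anti-commuting $w$ past $\ell$ rewrites $w\wedge\ell\wedge\eta$ as $-\ell\wedge(w\wedge\eta)$, exhibiting the image inside $L\wedge\Lambda^kW\otimes E$.

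For the interior-multiplication inclusions, the vanishing on $V_{12}$ is immediate from $\ell^\sharp\perp W$, and the inclusion on $V_{22}$ follows because $\iota_{w^\sharp}$ preserves $\Lambda^*W$. The cases of $V_{11}$ and $V_{21}$ require one application of the graded Leibniz identity $\iota_v(\alpha\wedge\beta)=(\iota_v\alpha)\wedge\beta+(-1)^{|\alpha|}\alpha\wedge\iota_v\beta$. On $V_{11}$, this collapses $\iota_{\ell^\sharp}(\ell'\wedge\eta)$ to $\ell'(\ell^\sharp)\,\eta$ (the remaining Leibniz term dies since $\eta\in\Lambda^{k-1}W$), so the image lies in $\Lambda^{k-1}W\otimes E$. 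On $V_{21}$, the first Leibniz term vanishes by $\iota_{w^\sharp}\ell=0$, leaving $-\ell\wedge\iota_{w^\sharp}\eta$, which sits inside $L\wedge\Lambda^{k-2}W\otimes E$.

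There is no genuine obstacle; the lemma is pure bookkeeping, which is presumably why the authors defer the computation. The only points worth flagging are the collapse $\ell\wedge\ell'=0$ coming from $\dim L=1$, the consistent use of the two orthogonality relations above, and the anti-commutation sign incurred when pushing an $L$-factor past a $W$-factor inside a wedge.
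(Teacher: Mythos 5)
Your verification is correct and complete: all eight statements follow exactly as you compute them, with the key points being $\ell\wedge\ell'=0$ from $\dim L=1$, the vanishing of $\iota_{\ell^\sharp}$ on $\Lambda^*W$ and of $\iota_{w^\sharp}$ on $L$, and the sign from anti-commuting the $L$- and $W$-factors. The paper explicitly omits the proof as a straightforward check, and your argument is precisely the intended bookkeeping.
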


In fact we can say more:
\begin{lemma} With the trivialization $L\simeq \bR$ induced by a norm $1$ section $\xi_0$, one has the following bundle isometries:
 \[V_{11}\simeq \Lambda^{k-1}W\otimes E,\qquad V_{12}\simeq \Lambda^kW\otimes E.
 \] 
 Moreover, with these identifications, the operators $\iota^E$ and $\epsilon^E$ simplify as follows
 \begin{equation}\label{eq40}\iota^E\bigr|_{V_{11}}=\id_{\Lambda^{k-1}W\otimes E} , \end{equation}
\begin{equation}\label{eq41} \iota^E\bigr|_{V_{22}}:W\otimes \Lambda^kW\otimes E\ra \Lambda^{k-1}W\otimes E,\quad  \iota^E\bigr|_{V_{22}}(y\otimes z\otimes s)=\iota_{y^ {\sharp}}(z)\otimes s ,
\end{equation}
 \begin{equation}\label{eq4}\epsilon^E\bigr|_{V_{12}}=\id_{\Lambda^{k}W\otimes E} ,
 \end{equation} 
 \begin{equation}\label{eq42}\epsilon^E\bigr|_{V_{21}}: W\otimes  \Lambda^{k-1}W\otimes E \ra \Lambda^kW\otimes E,\qquad \epsilon^E\bigr|_{V_{21}}( y\otimes z\otimes s)= y\wedge z\otimes s .
 \end{equation} 
\end{lemma}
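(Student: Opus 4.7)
The plan is a direct unwinding of definitions, once the isometric identifications induced by the unit section $\xi_0$ of $L$ are made explicit. First I will pin down the identifications and verify they are isometries; then I will substitute into the bare formulas $\iota^E(f\otimes \omega\otimes s) = \iota_{f^\sharp}(\omega)\otimes s$ and $\epsilon^E(f\otimes \omega\otimes s) = f\wedge \omega\otimes s$ and simplify on each of the four subspaces $V_{11}, V_{12}, V_{21}, V_{22}$.

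For the identifications, I would trivialize $L\simeq \bR$ via $a\xi_0\leftrightarrow a$, and identify $L\wedge \Lambda^j W \simeq \Lambda^j W$ via $\xi_0\wedge \omega \leftrightarrow \omega$. The first is an isometry because $|\xi_0|=1$. The second is an isometry because $\xi_0$ is orthogonal to every element of $W$ in $T^*M$, so $|\xi_0\wedge \omega|=|\xi_0|\,|\omega|=|\omega|$ for $\omega \in \Lambda^j W$. Applying these isometries tensorially yields the two bundle isometries stated in the lemma: $V_{11} \simeq \Lambda^{k-1}W\otimes E$ via $\xi_0\otimes (\xi_0\wedge \omega)\otimes s\leftrightarrow \omega\otimes s$, and $V_{12} \simeq \Lambda^k W\otimes E$ via $\xi_0\otimes \omega\otimes s\leftrightarrow \omega\otimes s$.

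Now the four formulas. Equation (\ref{eq41}) for $\iota^E|_{V_{22}}$ is immediate from the definition since no identification is involved. Equation (\ref{eq4}) for $\epsilon^E|_{V_{12}}$ follows from $\epsilon^E(\xi_0 \otimes \omega\otimes s) = (\xi_0\wedge \omega)\otimes s$, which the target identification $L\wedge \Lambda^k W\simeq \Lambda^k W$ sends to $\omega\otimes s$. For (\ref{eq40}) the crucial input is the antiderivation property
\[\iota_v(\alpha\wedge \beta) = \iota_v(\alpha)\wedge \beta + (-1)^{|\alpha|}\alpha\wedge \iota_v(\beta).\]
Applied with $v=\xi_0^\sharp$, $\alpha=\xi_0$ and $\beta=\omega\in \Lambda^{k-1}W$, using $|\xi_0|^2=1$ together with $\iota_{\xi_0^\sharp}\omega = 0$ (since $\xi_0$ is perpendicular to every $1$-form in $W$, and $\omega$ is a wedge of such), this gives $\iota_{\xi_0^\sharp}(\xi_0\wedge \omega) = \omega$. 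Substituting into $\iota^E$ yields exactly (\ref{eq40}). Finally (\ref{eq42}) is the direct computation $\epsilon^E(y\otimes (\xi_0\wedge z)\otimes s) = (y\wedge \xi_0\wedge z)\otimes s$, rewritten under the target identification $L\wedge \Lambda^k W\simeq \Lambda^k W$ after reordering $y\wedge \xi_0$.

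No step presents a genuine obstacle: the lemma is essentially a bookkeeping consequence of the explicit trivialization by $\xi_0$. The only point deserving mild attention is the uniform choice of sign convention in the identifications $L\wedge \Lambda^j W\simeq \Lambda^j W$ across source and target of $\epsilon^E|_{V_{21}}$, which must be fixed consistently so that the reordering $y\wedge \xi_0 = -\xi_0\wedge y$ does not leave a stray sign in (\ref{eq42}).
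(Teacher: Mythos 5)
Your direct verification is precisely the ``straightforward check'' the paper alludes to (it omits the proof of this lemma entirely), and each of the four formulas is correctly established; in particular, the antiderivation identity giving $\iota_{\xi_0^{\sharp}}(\xi_0\wedge\omega)=\omega$ is the right key step for (\ref{eq40}), and the target identifications $L\wedge\Lambda^jW\simeq\Lambda^jW$ are exactly what is needed for (\ref{eq4}) and (\ref{eq42}). Your caveat about the sign in (\ref{eq42}) is well taken but harmless: one may absorb it into the chosen trivialization of the $L\wedge\Lambda^{k-1}W$ factor of $V_{21}$, and in any case only the norms $|\epsilon^E(\cdot)|$ and $|\iota^E(\cdot)|$ enter the subsequent Lemma \ref{lemiii}, so an overall sign is immaterial.
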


Following (\ref{equ8}), we  decompose $v\in T^ *M\otimes\Lambda^kT^*M\otimes E$ as
 \begin{equation}\label{equ101}v=v_{11}+v_{12}+v_{21}+v_{22}.\end{equation}

\begin{lemma} \label{lemiii}
Fix $c\geq 0$ and $c_*\geq 0$. 
\begin{itemize}
\item[(i)]
 Let $v_{12}\in V_{12}$ and $v_{21}\in V_{21}$. The following holds
 \[ |v_{21}|^2+c |\epsilon^E(v_{12}+v_{21})|^2\geq \tilde{c}|v_{12}|^2 , \]
 where
\[\tilde{c}=\left\{\begin{array}{ccc} \frac{1}{k}&\mbox{if} &  \epsilon^ E(v_{12}+v_{21})=0 , \\
 \frac{c}{1+ck}&\mbox{if}&   \epsilon^ E(v_{12}+v_{21})\neq 0 .
\end{array}  \right. \]
\item[(ii)] 
 Let $v_{11}\in V_{11}$ and  $v_{22}\in V_{22}$.
 The following holds:
 \[ |v_{22}|^2+c_* |\iota^E(v_{11}+v_{22})|^2\geq \tilde{c}_*|v_{11}|^2 , \]
 where 
\[\tilde{c}_*=\left\{\begin{array}{ccc} \frac{1}{n-k}&\mbox{if} &  \iota^ E(v_{11}+v_{22})=0 , \\
 \frac{c_*}{1+c_*(n-k)}&\mbox{if}&   \iota^ E(v_{11}+v_{22})\neq 0 .
\end{array}  \right. \]
\end{itemize}
\end{lemma}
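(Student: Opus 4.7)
The plan is to deduce both inequalities as direct applications of Lemma~\ref{alglem}; all other ingredients are already on the table in Lemma~\ref{declem} and the identifications~(\ref{eq40})--(\ref{eq42}).

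For part (i), I would set $C := \epsilon^E$ viewed as an operator $V_{12}\oplus V_{21}\to L\wedge\Lambda^k W\otimes E$, take $U_2 := V_{21}$, and substitute $u_1 := v_{12}$, $u_2 := v_{21}$. Two checks are needed. First, by~(\ref{eq4}) the restriction $\epsilon^E|_{V_{12}}$ is the identity under the natural isometric identifications, so $|C(u_1)|^2 = |\epsilon^E(v_{12})|^2 = |v_{12}|^2$, which matches the right-hand side of the claim. Second, by~(\ref{eq42}), after identifying $V_{21}\simeq W\otimes \Lambda^{k-1}W\otimes E$, the restriction $\hat{C} := \epsilon^E|_{V_{21}}$ becomes $\epsilon\otimes \id_E$, where $\epsilon\colon W\otimes \Lambda^{k-1}W\to \Lambda^k W$ is ordinary exterior multiplication on the $(n-1)$-dimensional space $W$. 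Example~\ref{firstex}(2), applied to $W$ with input degree $k-1$, gives $\epsilon\epsilon^* = k\cdot\id$, hence $\Spec(\hat{C}\hat{C}^*)=\{k\}\subset[0,k]$. Lemma~\ref{alglem} with $a=k$ then produces exactly the dichotomy claimed for $\tilde{c}$.

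Part (ii) proceeds in complete parallel: I would take $C := \iota^E$ on $V_{11}\oplus V_{22}\to \Lambda^{k-1}W\otimes E$, with $U_2 := V_{22}$, $u_1 := v_{11}$, $u_2 := v_{22}$. Formula~(\ref{eq40}) gives $|\iota^E(v_{11})|^2 = |v_{11}|^2$, and~(\ref{eq41}) identifies $\hat{C} := \iota^E|_{V_{22}}$ with $\iota\otimes \id_E$ for $\iota\colon W\otimes \Lambda^k W\to \Lambda^{k-1}W$. Since $\dim W = n-1$, Example~\ref{firstex}(3) applied to $W$ yields $\iota\iota^* = ((n-1)-k+1)\cdot\id = (n-k)\cdot\id$, so Lemma~\ref{alglem} with $a=n-k$ delivers the claimed $\tilde{c}_*$.

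There is no substantive obstacle here; the entire argument is bookkeeping. The only points requiring minor care are the dimension substitution $n\mapsto n-1$ when invoking Examples~\ref{firstex}(2) and~(3) on $W$ rather than on the ambient cotangent space, and the verification that the wedge identification $L\otimes \Lambda^j W \simeq L\wedge \Lambda^j W$ is an isometry, which is immediate in an orthonormal basis adapted to the splitting $T_x^*M = L\oplus W$.
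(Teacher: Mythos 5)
Your argument is correct and is essentially the paper's own proof: both apply Lemma~\ref{alglem} with $C=\epsilon^E$ on $U=V_{12}\oplus V_{21}$, $U_2=V_{21}$ (resp.\ $C=\iota^E$ on $V_{11}\oplus V_{22}$, $U_2=V_{22}$), identify $\hat{C}\hat{C}^*=k\,\id$ (resp.\ $(n-k)\,\id$) via Example~\ref{firstex}(2)--(3) applied to $W$ of dimension $n-1$, and conclude using~(\ref{eq4}) (resp.~(\ref{eq40})). Your write-up merely makes explicit the bookkeeping that the paper leaves to the reader.
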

\begin{proof} Both items rely on Lemma \ref{alglem} in conjunction with  Example \ref{firstex} items (2) and (3) if we observe that
\begin{itemize}
\item[(i)] $\epsilon^ E\bigr|_{V_{21}}$ is a conformal projection with conformity factor $\sqrt{k}$ because exterior multiplication is considered on forms of degree $k-1$.
We therefore use Lemma \ref{alglem} with $U=V_{12}\oplus V_{21}$, $C=\epsilon^ E$, $U_2=V_{21}$. Conclude via (\ref{eq4}).
\item[(ii)] $\iota_1^ E\bigr|_{V_{22}}$ is a conformal projection with conformity factor $\sqrt{n-k}$ since $W$ has rank $n-1$. 
We therefore use Lemma \ref{alglem} with $U=V_{11}\oplus V_{22}$, $C=\iota^ E$, $U_2=V_{22}$. Conclude via (\ref{eq40}).
\end{itemize}
\end{proof}

An immediate consequence is 
\begin{corollary} \label{coro1} If $v=v_{11}+v_{12}+v_{21}+v_{22}\in T^*M\otimes \Lambda^kT^*M\otimes E$ then
\[ |v|^2+c|\epsilon^E(v_{12}+v_{21})|^2+c_* |\iota^E(v_{11}+v_{22})|^2\geq (1+\min\{\tilde{c},\tilde{c}_*\})(|v_{11}|^2+|v_{12}|^2).
\]
\end{corollary}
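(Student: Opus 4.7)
The plan is to apply the two inequalities of Lemma \ref{lemiii} directly and sum them, using the orthogonality of the decomposition (\ref{equ101}) to handle the $|v|^2$ term on the left.

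First I would write
\[|v|^2 = |v_{11}|^2 + |v_{12}|^2 + |v_{21}|^2 + |v_{22}|^2,\]
which is valid because the four summands in (\ref{equ101}) lie in mutually orthogonal subspaces of $T^*M \otimes \Lambda^k T^*M \otimes E$.

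Next, I would add the two inequalities from Lemma \ref{lemiii}: part (i) yields
\[|v_{21}|^2 + c\,|\epsilon^E(v_{12}+v_{21})|^2 \geq \tilde{c}\,|v_{12}|^2,\]
and part (ii) yields
\[|v_{22}|^2 + c_*\,|\iota^E(v_{11}+v_{22})|^2 \geq \tilde{c}_*\,|v_{11}|^2.\]
Summing these and then adding $|v_{11}|^2 + |v_{12}|^2$ to both sides gives
\[|v|^2 + c\,|\epsilon^E(v_{12}+v_{21})|^2 + c_*\,|\iota^E(v_{11}+v_{22})|^2 \geq (1+\tilde{c}_*)|v_{11}|^2 + (1+\tilde{c})|v_{12}|^2.\]

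Finally, I would bound the right-hand side from below by replacing both $\tilde{c}$ and $\tilde{c}_*$ with $\min\{\tilde{c},\tilde{c}_*\}$, which is legitimate since both $|v_{11}|^2$ and $|v_{12}|^2$ are nonnegative. There is no real obstacle here: the argument is entirely mechanical once Lemma \ref{lemiii} is in place, the only subtlety being to recognize that the four subspaces $V_{11}, V_{12}, V_{21}, V_{22}$ are orthogonal so that the norms add cleanly.
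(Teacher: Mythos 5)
Your proof is correct and follows exactly the same route as the paper's: split $|v|^2$ via the orthogonality of the decomposition (\ref{equ101}), apply the two parts of Lemma \ref{lemiii}, and pass to the minimum of $\tilde{c},\tilde{c}_*$. The paper's version is just a terser statement of the same argument.
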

\begin{proof} Since the decomposition  in (\ref{equ101}) is orthogonal we clearly have 
\[|v|^ 2=|v_{11}|^ 2+|v_{12}|^ 2+(|v_{22}|^ 2+ |v_{21}|^ 2).\] 
Then use Lemma \ref{lemiii}.
\end{proof}

We apply this corollary to $v=\nabla \phi$ where $\phi\in\Gamma(\Lambda^kT^ *M\otimes E)$ and note that, due to Lemma \ref{declem}, one also has the following straightforward inequalities:
\begin{equation}\label{equ1} |d\phi|^2=|\epsilon^E(\nabla\phi)|^2\geq  |\epsilon^E((\nabla \phi)_{12}+(\nabla \phi)_{21})|^2,
\end{equation}
\begin{equation}\label{equ2}|d^ *\phi|^2=|\iota^E(\nabla\phi)|^2\geq  |\iota^E((\nabla \phi)_{11}+(\nabla \phi)_{22})|^2.
\end{equation}

Note also that
 \begin{equation}\label{equ4}|(\nabla \phi)_{11}|^2+|(\nabla \phi)_{12}|^2=|\nabla_{\xi_0^{\sharp}}\phi|^2\geq |d|\phi||^ 2.
\end{equation}

Now, Corollary \ref{coro1}  immediately implies together with (\ref{equ1}), (\ref{equ2}) and (\ref{equ4}) the main result.

\begin{theorem} Let $\nabla$ be a metric connection on a Riemannian vector bundle $E \ra M^n$. Fix constants $c\geq 0$ and $c_{*}\geq 0$. Let $\phi$ be a section of $\Lambda^{k}T^{*}M\otimes E$ and let $x$ be a point in $M$ such that $\phi(x)\neq0$. Then at $x$ the following pointwise inequality holds
\[
|\nabla\phi|^{2}+c|d^\nabla \phi|^{2}+c_{*}|(d^{\nabla})^*\phi|^{2}\geq\left(1+\min\left\{ \tilde{c},\tilde{c}_{*}\right\} \right)|d|\phi||^{2},
\]
where
\[
\tilde{c}=\begin{cases}
\frac{1}{k} & if\,\,\,\left(d^\nabla \phi\right)_{x}=0,\\
\frac{c}{1+kc} & if\,\,\,\left(d^\nabla \phi\right)_{x}\neq0,
\end{cases}\,\,\,\,\,\,\,\,\,\,\tilde{c}_{*}=\begin{cases}
\frac{1}{n-k} & if\,\,\,\left((d^\nabla)^{*}\phi\right)_{x}=0,\\
\frac{c_{*}}{1+\left(n-k\right)c_{*}} & if\,\,\,\left((d^\nabla)^{*}\phi\right)_{x}\neq0.
\end{cases}
\]
On the left hand side, $d^\nabla$ is the covariant exterior derivative of
$\nabla$ and $(d^\nabla)^*$ is the adjoint of $d^\nabla$.
\end{theorem}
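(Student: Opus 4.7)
The plan is to combine Corollary \ref{coro1} with the three auxiliary inequalities (\ref{equ1}), (\ref{equ2}), (\ref{equ4}) stated just above. The heavy lifting has already been done: the four-piece decomposition (\ref{equ8}), the action of $\epsilon^E$ and $\iota^E$ on each summand (Lemma \ref{declem}), and the key algebraic inequalities (Lemma \ref{lemiii}, Corollary \ref{coro1}) are all in place. What remains is a bookkeeping step and a comparison of two different case splits.

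As in the proof of Theorem \ref{thm:foldo}, continuity lets me assume $d|\phi| \neq 0$ in a neighborhood of $x$, so the unit covector $\xi_0 := d|\phi|^2/|d|\phi|^2|$ is well-defined there. Setting $L := \langle \xi_0\rangle \subset T^*_xM$ and letting $W$ be its orthogonal complement produces the orthogonal decomposition (\ref{equ8}) of $T^*_xM \otimes \Lambda^k T^*_xM \otimes E_x$, relative to which I decompose $(\nabla\phi)_x = v_{11} + v_{12} + v_{21} + v_{22}$. Applying Corollary \ref{coro1} to this $v$ with the given constants $c, c_*$ yields
\[
|\nabla\phi|^2 + c\,|\epsilon^E(v_{12}+v_{21})|^2 + c_*\,|\iota^E(v_{11}+v_{22})|^2 \;\geq\; \bigl(1 + \min\{\tilde{c},\tilde{c}_*\}\bigr)\bigl(|v_{11}|^2 + |v_{12}|^2\bigr).
\]
Next I would invoke Lemma \ref{nL} to identify $d^\nabla\phi$ and $(d^\nabla)^*\phi$ with $\epsilon^E(\nabla\phi)$ and $-\iota^E(\nabla\phi)$, and use (\ref{equ1})--(\ref{equ2}) to replace the two middle terms on the left by the larger quantities $|d^\nabla\phi|^2$ and $|(d^\nabla)^*\phi|^2$. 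Finally (\ref{equ4}), which combines Lemma \ref{cord2} with a one-line Cauchy--Schwarz argument (exactly as in display (\ref{mine})), upgrades the right-hand side to $(1 + \min\{\tilde{c},\tilde{c}_*\})\,|d|\phi||^2$.

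The only potential snag is matching the two case distinctions. Corollary \ref{coro1} branches on the vanishing of $\epsilon^E(v_{12}+v_{21})$ and $\iota^E(v_{11}+v_{22})$, whereas the theorem branches on the vanishing of the full expressions $d^\nabla\phi$ and $(d^\nabla)^*\phi$. By Lemma \ref{declem} the pieces of $\epsilon^E(\nabla\phi)$ lying in $L\wedge\Lambda^kW\otimes E$ and in $\Lambda^{k+1}W\otimes E$ are mutually orthogonal, hence $d^\nabla\phi = 0$ forces $\epsilon^E(v_{12}+v_{21})=0$, so the theorem's first case inherits the constant $\tilde c = 1/k$ directly from the corollary. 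Conversely, when $\epsilon^E(v_{12}+v_{21})=0$ but $d^\nabla\phi \neq 0$, the corollary supplies the stronger constant $1/k \geq c/(1+kc)$, so the weaker statement of the theorem still holds \emph{a fortiori}. An identical argument handles $\tilde c_*$. I expect no genuine obstacle beyond this reconciliation, since every analytic estimate needed is already packaged in the preceding lemmas.
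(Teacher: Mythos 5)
Your proposal is correct and follows essentially the same route as the paper: decompose $(\nabla\phi)_x$ according to (\ref{equ8}) with $L=\langle\xi_0\rangle$, apply Corollary \ref{coro1}, and conclude via (\ref{equ1}), (\ref{equ2}) and (\ref{equ4}). Your explicit reconciliation of the two case splits (the corollary branching on $\epsilon^E(v_{12}+v_{21})$ and $\iota^E(v_{11}+v_{22})$ versus the theorem branching on $d^\nabla\phi$ and $(d^\nabla)^*\phi$) is a detail the paper leaves implicit in its ``immediately implies,'' and you resolve it correctly.
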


\section{Applications} \label{sect5}

In this section we give some applications of Theorem \ref{thm:foldo} and Theorem \ref{thm:hodge}.

We obtain a refined Kato inequality for Yang-Mills connections in
any dimension. This generalizes the refined Kato inequality in \cite{R1991, GKS2018}. We would like to point out that the proof in \cite{GKS2018}
is based on conformal metrics and Bochner's formula, a completely different method. Note that for Yang-Mills connections on $M^{n}$
with $n\geq5$ our constant is better than the constant in \cite{SU2019}.
\begin{corollary}
Let $A$ be a Yang-Mills connection on a Riemannian vector bundle $E \ra M^n$. Then the curvature $F$ of $A$ satisfies the pointwise inequality
\[
\left|\nabla F\right|^{2}\geq\left(1+\min\left\{ \frac{1}{2},\frac{1}{n-2}\right\} \right)\left|d\left|F\right|\right|^{2}.
\]
\end{corollary}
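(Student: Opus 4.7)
The plan is to deduce this corollary as a direct specialization of Theorem \ref{thm:hodge}. I would view the curvature $F$ of $A$ as a section of $\Lambda^{2}T^{*}M \otimes \End(E)$, where $\End(E)$ inherits a fiber metric and the connection $\nabla^{\End(E)}$ induced by $A$. Since $\End(E)$ is itself a Riemannian vector bundle over $M$, this fits the setup of Theorem \ref{thm:hodge} with $\phi = F$, $k = 2$, and the twisting bundle taken to be $\End(E)$.

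The two ingredients I would use are the classical identities for Yang-Mills curvature. The second Bianchi identity yields $d^{\nabla}F = 0$ pointwise, while the Yang-Mills equation (the defining condition on $A$) is precisely $(d^{\nabla})^{*}F = 0$. At any point $x$ with $F(x)\neq 0$ both quantities therefore vanish, which selects the ``$=0$'' branches in the piecewise formulas for $\tilde c$ and $\tilde c_*$ in Theorem \ref{thm:hodge}, giving $\tilde c = 1/k = 1/2$ and $\tilde c_* = 1/(n-k) = 1/(n-2)$ regardless of the values of $c$ and $c_*$. Choosing $c = c_* = 0$ on the left-hand side then yields exactly
\[
|\nabla F|^{2} \geq \left(1 + \min\left\{\tfrac{1}{2},\tfrac{1}{n-2}\right\}\right)|d|F||^{2}.
\]
At points where $F(x) = 0$ the inequality is trivial, since both sides vanish.

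There is no substantive obstacle, as all the analytic content has already been packaged into Theorem \ref{thm:hodge}. The only point worth verifying is that $\End(E)$ indeed plays the role of a generic auxiliary Riemannian bundle in Theorem \ref{thm:hodge}, so that the twisted operators $d^{\nabla}$ and $(d^{\nabla})^{*}$ appearing in the corollary coincide with those in the theorem; this is immediate from the construction and is exactly the reason the theorem was stated in the twisted generality.
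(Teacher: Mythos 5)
Your proposal is correct and is essentially identical to the paper's own proof: the paper likewise invokes the Bianchi identity $d^\nabla F=0$ and the Yang--Mills equation $(d^\nabla)^*F=0$ and applies Theorem \ref{thm:hodge} with $k=2$, so that both $\tilde c$ and $\tilde c_*$ fall into their ``$=0$'' branches. The only addition the paper makes is the remark that the result also follows from Theorem \ref{thm:foldo} or Theorem \ref{Dinjelliptic}.
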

\begin{proof}
We have $dF=0$ and $dF^{*}=0$. Apply Theorem \ref{thm:hodge} to
$F$ with $k=2$. Note that this result is also a consequence of Theorem \ref{thm:foldo} or of Theorem \ref{Dinjelliptic}.
\end{proof}

We obtain an extended Kato inequality for Yang-Mills-Higgs pairs in
any dimension.
\begin{corollary}
Let $\left(A,\Phi\right)$ be a Yang-Mills-Higgs pair on a Riemannian vector bundle $E \ra M^n$. Fix a constant $a>0$. Then the curvature $F$ of $A$ and the Higgs field $\Phi$ satisfy the inequalities
\[
\left|\nabla F\right|^{2}\geq\left(1+\min\left\{ \frac{1}{2},\frac{a}{1+\left(n-2\right)a}\right\} \right)\left|d\left|F\right|\right|^{2}-a\left|\left[d\Phi,\Phi\right]\right|^{2},
\]
\[
\left|\nabla d\Phi\right|^{2}\geq\left(1+\min\left\{ \frac{1}{n-1},\frac{a}{1+a}\right\} \right)\left|d\left|d\Phi\right|\right|^{2}-a\left|\left[F,\Phi\right]\right|^{2}.
\]
\end{corollary}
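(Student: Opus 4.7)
The plan is to apply Theorem \ref{thm:hodge} twice: once to the curvature $F$ viewed as a $2$-form with values in $\mathrm{End}(E)$ (or the adjoint bundle), and once to the differential $d\Phi$ viewed as a $1$-form. Before doing so I would recall the relevant structural equations of a Yang-Mills-Higgs pair: the Bianchi identity
\[
d^\nabla F = 0,
\]
together with the Euler-Lagrange equations, which give
\[
(d^\nabla)^* F = [d\Phi,\Phi], \qquad (d^\nabla)^* d\Phi = 0,
\]
up to the sign conventions implicit in the statement. The identity $d^\nabla d\Phi = [F,\Phi]$, which is nothing but the defining relation of curvature applied to $\Phi$, will also be needed.

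For the first inequality I would apply Theorem \ref{thm:hodge} to $\phi = F$, with $k=2$, $c = 0$, and $c_* = a$. Because $d^\nabla F = 0$, the case $(d^\nabla F)_x = 0$ always holds and thus $\tilde c = 1/k = 1/2$. The term $c_*|(d^\nabla)^* F|^2$ is rewritten, via the YMH equation, as $a|[d\Phi,\Phi]|^2$. Substituting $\tilde c_* = a/(1+(n-2)a)$ in the case $(d^\nabla)^* F\neq 0$ and moving the commutator term to the right-hand side yields exactly the first stated inequality; the boundary case $(d^\nabla)^* F = 0$ forces $[d\Phi,\Phi] = 0$ and gives the inequality with the larger constant $1 + \min\{1/2,1/(n-2)\}$, which is stronger.

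For the second inequality I would apply Theorem \ref{thm:hodge} to $\phi = d\Phi$, with $k=1$, $c = a$ and $c_* = 0$. Now the co-closed case applies automatically because $(d^\nabla)^* d\Phi = 0$, giving $\tilde c_* = 1/(n-k) = 1/(n-1)$, while the term $c|d^\nabla d\Phi|^2$ becomes $a|[F,\Phi]|^2$ via $d^\nabla d\Phi = [F,\Phi]$. Moving this commutator term to the right-hand side and using $\tilde c = a/(1+a)$ gives the second claimed inequality; again the exceptional case $(d^\nabla d\Phi)_x = 0$ only strengthens it.

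There is no real obstacle here beyond bookkeeping: one must check that the sign conventions in the Yang-Mills-Higgs equations match those of Theorem \ref{thm:hodge} (only norms squared appear, so signs are harmless), and that when one of the quantities $(d^\nabla \phi)_x$ or $(d^\nabla)^*\phi)_x$ vanishes, the corresponding limit in $\tilde c$ or $\tilde c_*$ is taken correctly so that the stated minimum is the smaller of the two expressions in every case. Everything else is a direct substitution.
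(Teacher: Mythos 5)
Your proposal is correct and follows essentially the same route as the paper: apply Theorem \ref{thm:hodge} to $F$ with $k=2$ (using $d^\nabla F=0$ and $(d^\nabla)^*F=[d\Phi,\Phi]$) and to $d\Phi$ with $k=1$ (using $(d^\nabla)^*d\Phi=0$ and $d^\nabla d\Phi=[F,\Phi]$), then move the commutator terms to the right-hand side. The paper's proof is just a terser version of this; your explicit bookkeeping of the parameter choices and of the degenerate cases is consistent with it.
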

\begin{proof}
(i) We have $dF=0$ and $d^{*}F=\left[d\Phi,\Phi\right]$ (we may
have $d^{*}F\neq0$). Apply Theorem \ref{thm:hodge} to $F$ with
$k=2$. (ii) We have $d^{*}d\Phi=0$ and $dd\Phi=\left[F,\Phi\right]$
(we may have $dd\Phi\neq0$). Apply Theorem \ref{thm:hodge} to $d\Phi$
with $k=1$.
\end{proof}
We obtain an extended Kato inequality for certain special Yang-Mills
connections (instantons) and special Yang-Mills-Higgs pairs (monopoles).
\begin{corollary}
Let $E \ra M^n$ be a Riemannian vector bundle.

(i) Suppose $n=4$. Let $A$ be a Yang-Mills connection on $E$ with curvature $F$. Write $F^{\pm}=\frac{1}{2}\left(F\pm*F\right)$
(note that $A$ is an instanton if and only if $F^{\pm}=0$). Then
\[
\left|\nabla F^{\pm}\right|^{2}\geq\left(1+\frac{1}{2}\right)\left|d\left|F^{\pm}\right|\right|^{2}.
\]

(ii) Suppose $n=3$. Let $\left(A,\Phi\right)$ be a Yang-Mills-Higgs
pair on $E$ with curvature $F$. Write $\omega^ {\pm}=*F\pm d\Phi$
(note that $\left(A,\Phi\right)$ is a monopole if and only if $\omega=0$).
Fix a constant $a>0$. Then
\[
|\nabla\omega^ {\pm}|^{2}\geq\left(1+\min\left\{ \frac{1}{2},\frac{a}{1+a}\right\} \right)\left|d\left|\omega^{\pm}\right|\right|^{2}-a\left|d\omega^ {\pm}\right|^{2}.
\]
We remark that $|d\omega^\pm| = |[\omega^\pm,\phi]|$.
\end{corollary}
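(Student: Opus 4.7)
The plan is to apply Theorem~\ref{thm:hodge} to $F^{\pm}$ in part (i) and to $\omega^{\pm}$ in part (ii), with appropriately chosen parameters $c$ and $c_*$. In each case the main preliminary work is to compute $d^{\nabla}$ and $(d^{\nabla})^*$ of the form and verify either full Hodge-harmonicity or mere co-closedness, after which the inequality follows by direct substitution.

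For part (i), I would first show that $F^{\pm}$ is both $d^{\nabla}$-closed and $(d^{\nabla})^*$-closed. Bianchi gives $d^{\nabla} F = 0$ and the Yang-Mills condition gives $(d^{\nabla})^* F = 0$; combining these with the (anti-)self-duality $*F^{\pm} = \pm F^{\pm}$ and the dimension-$4$ Hodge identity $d^{\nabla} * = \pm *(d^{\nabla})^*$ on $2$-forms yields both $d^{\nabla} F^{\pm} = 0$ and $(d^{\nabla})^* F^{\pm} = 0$. Theorem~\ref{thm:hodge} with $k=2$, $n=4$ and $c = c_* = 0$ then gives $\tilde c = \tilde c_* = 1/k = 1/(n-k) = 1/2$, producing the constant $3/2$ on the right-hand side.

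For part (ii), I would use the Yang-Mills-Higgs equations recorded in the preceding corollary: $(d^{\nabla})^* F = [d^{\nabla}\Phi, \Phi]$, $(d^{\nabla})^* d^{\nabla}\Phi = 0$, together with $d^{\nabla}d^{\nabla}\Phi = [F, \Phi]$ and Bianchi $d^{\nabla} F = 0$. The $3$-dimensional Hodge identity $d^{\nabla}(*F) = *((d^{\nabla})^* F)$ on $2$-forms, together with the fact that $*$ commutes with the pointwise bracket, gives
\[ d^{\nabla}\omega^{\pm} = \pm [*\omega^{\pm}, \Phi], \]
while co-closedness $(d^{\nabla})^*\omega^{\pm} = 0$ follows from the Higgs equation together with $(d^{\nabla})^*(*F) = 0$, the latter reducing to Bianchi via the Hodge identity on $1$-forms. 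Theorem~\ref{thm:hodge} with $k=1$, $n=3$, $c=a$, $c_*=0$ then yields $\tilde c = a/(1+a)$ and $\tilde c_* = 1/(n-k) = 1/2$; rearranging gives the stated inequality. The remark $|d^{\nabla}\omega^{\pm}| = |[\omega^{\pm}, \Phi]|$ is immediate from the displayed identity since $*$ is a pointwise isometry that commutes with the bracket.

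The delicate step will be tracking sign conventions in the Hodge-star manipulations: the identities $d^{\nabla}* = \pm *(d^{\nabla})^*$ carry degree- and dimension-dependent signs, and one must chase them carefully through the computation of $d^{\nabla}\omega^{\pm}$ to arrive at the correct commutator expression. Once this bookkeeping is done, the application of Theorem~\ref{thm:hodge} produces both parts of the corollary immediately.
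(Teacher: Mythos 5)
Your proposal is correct and follows essentially the same route as the paper: verify that $F^{\pm}$ is $d^{\nabla}$-closed and $(d^{\nabla})^*$-closed (resp.\ that $\omega^{\pm}$ is co-closed) using Bianchi and the Yang--Mills(--Higgs) equations together with the Hodge-star identities, then apply Theorem~\ref{thm:hodge} with $(n,k)=(4,2)$, resp.\ $(n,k)=(3,1)$ and $c=a$, $c_*=0$. The paper's proof is merely terser, stating the closedness facts without the Hodge-star bookkeeping you spell out; your derivation of $d^{\nabla}\omega^{\pm}=\pm\ast[\omega^{\pm},\Phi]$ is exactly how the paper justifies the final remark.
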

\begin{proof}
(i) This was proved in \cite{GKS2018} by another method. We have
$dF^{\pm}=0$ and $d^{*}F^{\pm}=0$. Apply Theorem \ref{thm:hodge}
to $F$ with $n=4$ and $k=2$. (ii) In the special case $a=1$, this was
proved in \cite{F2023} by another method. We have $d^{*}\omega^ {\pm}=0$
(we may have $d\omega^ {\pm}\neq0$). Apply Theorem \ref{thm:hodge} with
$n=3$ and $k=1$. The remark follows by taking the exterior covariant derivative of $\omega^\pm$ and using the fact that $d^{*}F=\left[d\Phi,\Phi\right]$ and $dd\Phi=\left[F,\Phi\right]$.
\end{proof}
We recover the extended Kato inequality for the Dirac operator in
\cite{CJS2014}.
\begin{corollary}
Let $D=c\circ\nabla$ be a Dirac operator on a hermitian vector bundle $S \ra M^n$, where $\nabla$ is a metric connection on $S$ and $c:TM\otimes S\to S$ is a Clifford multiplication. Fix a constant $c>0$. Let $\phi$ be a section of $S$ and let $x$ be a point in $M$ such that
$\phi (x) \neq0$. Then at $x$ the following pointwise inequality holds
\[
\left|\nabla\phi\right|^{2}+c\left|D\phi\right|^{2}\geq\left(1+\tilde{c}\right)\left|d\left|\phi\right|\right|^{2},
\]
where
\[
\tilde{c}=\begin{cases}
\frac{1}{n-1} & if\,\,\,d\phi=0,\\
\frac{c}{1+(n-1)c} & if\,\,\,d\phi\neq0.
\end{cases}
\]
\end{corollary}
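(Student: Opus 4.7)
The plan is to obtain this as an immediate specialization of Theorem~\ref{thm:foldo} to the Dirac operator, so no new analytic work is required; everything reduces to identifying the conformity factor and the ellipticity constant of $D$.

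First I would recall that the Dirac operator $D=c\circ\nabla$ fits the framework of Definition~\ref{confoperator}: its principal symbol at a point is, up to multiplication by a cotangent vector, the Clifford multiplication $c: T_x M\otimes S_x \to S_x$. Example~\ref{firstex}(1) shows that $c$ is a conformal projection with $cc^{*}=n\,\id_{S}$, so $D$ is a conformal operator with conformity factor $\rho^{2}=n$. Next, the symbol in the direction of a unit covector $\xi$ is Clifford multiplication $c_{\xi^{\sharp}}$, which satisfies $c_{\xi^{\sharp}}^{*}c_{\xi^{\sharp}}=|\xi|^{2}\id_{S}=\id_{S}$ by the Clifford relations. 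Hence $D$ is injectively elliptic with ellipticity constant $\epsilon=1$, as already noted in the list of examples following Theorem~\ref{Dinjelliptic}.

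With these two invariants in hand, the proof is completed by substituting $\epsilon=1$ and $\rho^{2}=n$ into Theorem~\ref{thm:foldo}. The borderline case $(D\phi)_{x}=0$ yields
\[
\tilde{c}=\frac{\epsilon}{\rho^{2}-\epsilon}=\frac{1}{n-1},
\]
while the generic case $(D\phi)_{x}\neq 0$ yields
\[
\tilde{c}=\frac{\epsilon c}{1+(\rho^{2}-\epsilon)c}=\frac{c}{1+(n-1)c},
\]
which is exactly the stated constant.

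There is no real obstacle here; the only small bookkeeping issue is to make sure the two conventions (the one for $\rho$ arising from $PP^{*}=\rho^{2}\id$ and the one for $\epsilon$ arising from $\inf P_{\xi}^{*}P_{\xi}$) are the ones used in Theorem~\ref{thm:foldo}, which they are by construction. One remark worth adding is that the dichotomy in the conclusion is stated in terms of $d\phi$ in the corollary's hypothesis, but the case split in Theorem~\ref{thm:foldo} is on $D\phi$; the corollary should be read with $D\phi$ in place of $d\phi$, after which the result is verbatim a special case of Theorem~\ref{thm:foldo}.
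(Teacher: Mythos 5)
Your proposal is correct and follows the paper's own proof exactly: identify $\rho^{2}=n$ from Example \ref{firstex}(1) and $\epsilon=1$ from the Clifford relations, then substitute into Theorem \ref{thm:foldo}. Your remark that the case split should read $D\phi$ rather than $d\phi$ is also a fair catch of a typo in the corollary's statement.
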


\begin{proof}
By Example \ref{firstex} the conformity factor satisfies $\rho^2 = n$, and it is well known that the ellipticity constant $\epsilon = 1$. Apply Theorem \ref{thm:foldo} with this $\rho$ and $\epsilon$.
\end{proof}

We close with another explicit computation.
\begin{corollary} Let $c:T^ *M\otimes S\ra S$ be a Clifford multiplication and $T:\Gamma(S)\ra  \Gamma(\Ker c)$ be the Penrose twistor operator with symbol the orthogonal projection onto $\Ker c$:
\[P:T^ *M\otimes S\ra \Ker c,\qquad P(v\otimes s):= v\otimes s+\frac{1}{n}\sum_{i=1}^ ne_i^*\otimes c_{e_i} c_v(s) . \]
 Then $\rho=1$, $\epsilon=\frac{n-1}{n}$ and therefore for any $\phi\in\Gamma(S)$ and any $x\notin \phi^ {-1}(0)$ one has
\[|\nabla \phi|^ 2+c|T(\phi)|^ 2\geq (1+\tilde{c})|d|\phi||^2 , \]
where $\tilde{c}=\left\{\begin{array}{ccc}n-1 &\mbox{if} & T(\phi)_x=0 , \\
\frac{(n-1)c}{n+c} &\mbox{if} & T(\phi)_x\neq 0 .
\end{array}\right.$
\end{corollary}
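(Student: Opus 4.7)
The plan is to verify the two numerical claims $\rho=1$ and $\epsilon=(n-1)/n$ and then to apply Theorem \ref{thm:foldo} directly. Both are routine Clifford-algebra computations and present no essential obstacle; the one point requiring care is recognising the explicit formula for $P$ as the orthogonal projection onto $\Ker c$.

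For $\rho=1$, I would first rewrite the given formula intrinsically as $P=\id-\tfrac{1}{n}c^*c$, where $c:T^*M\otimes S\to S$ is Clifford multiplication (using the metric identification $T^*M\simeq TM$). Indeed, by Example \ref{firstex}(1), $c^*(s)=-\sum_i e_i^*\otimes c_{e_i}s$, so $c^*c(v\otimes s)=-\sum_i e_i^*\otimes c_{e_i}c_v s$, which matches the correction term in the claimed formula up to the factor $-1/n$. Since $cc^*=n\,\id_S$, the operator $\tfrac{1}{n}c^*c$ is idempotent and self-adjoint with image $\Imag c^*=(\Ker c)^\perp$, and hence is the orthogonal projection onto $(\Ker c)^\perp$. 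Consequently $P$ is the orthogonal projection onto $\Ker c$, so $PP^*=\id_{\Ker c}$ and $\rho=1$.

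For $\epsilon=(n-1)/n$, I would expand $\|P(\xi\otimes s)\|^2$ for unit vectors $\xi\in T_x^*M$ and $s\in S$:
\[\|P(\xi\otimes s)\|^2=1+\tfrac{2}{n}\Bigl\langle\xi\otimes s,\sum_i e_i^*\otimes c_{e_i}c_\xi s\Bigr\rangle+\tfrac{1}{n^2}\Bigl\|\sum_i e_i^*\otimes c_{e_i}c_\xi s\Bigr\|^2.\]
The cross term collapses to $\tfrac{2}{n}\langle s,c_\xi^2 s\rangle=-\tfrac{2}{n}$ via $\sum_i\langle\xi,e_i^*\rangle c_{e_i}=c_\xi$ and $c_\xi^2=-|\xi|^2\id_S$; the last term collapses to $\tfrac{1}{n^2}\sum_i|c_{e_i}c_\xi s|^2=\tfrac{1}{n}|c_\xi s|^2=\tfrac{1}{n}$ using also $c_{e_i}^2=-\id_S$. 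Summing, $\|P(\xi\otimes s)\|^2=(n-1)/n$ independently of $(\xi,s)$, so the infimum defining $\epsilon$ equals $(n-1)/n$.

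Finally, plugging $\rho^2=1$ and $\epsilon=(n-1)/n$ (so $\rho^2-\epsilon=1/n$) into Theorem \ref{thm:foldo} yields $\tilde c=\epsilon/(\rho^2-\epsilon)=n-1$ when $(T\phi)_x=0$, and $\tilde c=\epsilon c/(1+(\rho^2-\epsilon)c)=(n-1)c/(n+c)$ when $(T\phi)_x\neq 0$, which is exactly the inequality in the corollary.
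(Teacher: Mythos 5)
Your proposal is correct and follows essentially the same route as the paper: verify $\rho=1$ and $\epsilon=\tfrac{n-1}{n}$ by a direct Clifford-algebra computation and then plug into Theorem \ref{thm:foldo}. The only cosmetic difference is that you expand the quadratic form $\|P(\xi\otimes s)\|^2$ whereas the paper computes the adjoint $P_v^*$ and forms $P_v^*P_v=\tfrac{n-1}{n}\id_S$ (an equivalent calculation), and you additionally verify via $P=\id-\tfrac{1}{n}c^*c$ that the displayed formula really is the orthogonal projection onto $\Ker c$, a point the paper takes as definitional.
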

\begin{proof} The symbol $P$ is an orthogonal projection by definition hence $\rho=1$. One computes easily the adjoint of $P_v(s):=P(v\otimes s)$ to be $P_v^*\bigr|_{\Ker c}$ where $P_v^*:T^*M\otimes S\ra S$ is
\[P_v^ *(w\otimes s):=\langle v,w\rangle s+\frac{1}{n}c_vc_w(s) , \]
and therefore $P_v^ *P_v=\frac{n-1}{n}\id_S$. This fits with $C_v^ *C_v=\frac{1}{n}\id_S$ where $C:=\frac{1}{\sqrt{n}}c$ is the normalized Clifford multiplication that gives an orthogonal projection.  So $\epsilon=\frac{n-1}{n}$.
\end{proof}

\bibliographystyle{acm}

\begin{thebibliography}{99}


\bibitem{BKN1989}\textsc{S. Bando, A. Kasue, H. Nakajima}, \emph{On a construction
of coordinates at infinity on manifolds with fast curvature decay
and maximal volume growth}, Invent. Math. {\bf 97} (1989), no. 2, 313-349.

\bibitem{BGV}\textsc{N. Berline, E. Getzler, M. Vergne}, \emph{Heat kernels and Dirac operators},
Grundlehren  Math.  Wiss. {\bf 298}, Springer, Berlin, (1992).

\bibitem{Bo} \textsc{J.-P. Bourguignon}, \emph{The magic of Weitzenb\"ock formulas}, Variational Methods (Paris 1988), H. Berestycki,
J.-M. Coron, I. Ekeland, eds., PNLDE, vol. {\bf 4}, Birkh\"auser Boston (1990), 251-271.

\bibitem{B2000} \textsc{T. Branson}, \emph{Kato constants in Riemannian geometry},
Math. Res. Lett. {\bf 7} (2000), no. 2-3, 245-261.

\bibitem{Br1}\textsc{T. Branson}, \emph{Stein-Weiss operators and ellipticity}, J. of Funct. Anal {\bf 151} (1997), 334-383. 

\bibitem{CGH2000}\textsc{D. M. J. Calderbank, P. Gauduchon, M. Herzlich},
\emph{Refined Kato inequalities and conformal weights in Riemannian geometry},
J. Funct. Anal. {\bf 173} (2000), no. 1, 214-255.

\bibitem{CJS2014} \textsc{Q. Chen, J. Jost, L. Sun}, \emph{Gradient estimates and
Liouville theorems for Dirac-harmonic maps}, J. Geom. Phys.{\bf 76} (2014),
66-78.

\bibitem{CB2011} \textsc{X. Chen, B. Weber}, \emph{Moduli spaces of critical Riemannian
metrics with $L^{n/2}$ norm curvature bounds}, Adv. Math. {\bf 226} (2011), no.
2, 1307-1330.

\bibitem{CS} \textsc{B. Chen, C. Song}, \emph{Isolated singularities of 3-dimensional Yang-Mills-Higgs fields}, arXiv:2406.16276 [math.DG]

\bibitem{CZ2012} \textsc{D. Cibotaru, P. Zhu}, \emph{Refined Kato inequalities for
harmonic fields on Kahler manifold}, Pacific J. Math. {\bf 256} (2012),
no. 1, 51-66.

\bibitem{CM2012}\textsc{T. Colding, W. P. Minicozzi}, \emph{Generic mean curvature
flow I: generic singularities}, Ann. of Math. (2) {\bf 175} (2012), no.2, 755-833.

\bibitem{F2023} \textsc{D. Fadel}, \emph{Asymptotics of finite energy monopoles on
AC 3-manifolds}, J. Geom. Anal. {\bf 33} 17 (2023).

\bibitem{F2001} \textsc{P. M. N. Feehan}, \emph{A Kato-Yau inequality and decay estimate
for eigenspinors}, J. Geom. Anal. {\bf 11} (2001), no. 3, 469-489.

\bibitem{GKS2018} \textsc{M. Gursky, C. L. Kelleher, J. Streets}, \emph{A conformally
invariant gap theorem in Yang-Mills theory}, Comm. Math. Phys. {\bf 361}
(2018), no. 3, 1155-1167.

\bibitem{GL1999} \textsc{M. J. Gursky, C. Lebrun} \emph{On Einstein manifolds of
positive sectional curvature}, Ann. Global Anal. Geom. {\bf 17} (1999), no.
4, 315-328.

\bibitem{Pi} \textsc{M. Pilca}, \emph{A new proof of Branson's classification of elliptic generalized gradients},
Manuscripta Math. {\bf 136} (2011),  65–81.

\bibitem{R1991} \textsc{J. Rade}, \emph{ Decay estimates for Yang-Mills fields; two
new proofs}, Global analysis in modern mathematics (Orono, ME, 1991;
Waltham, MA, 1992), 91-105.

\bibitem{SSY1975} \textsc{R. Schoen, L. Simon, S. T. Yau}, \emph{Curvature estimates
for minimal hypersurfaces}, Acta Math. {\bf 134} (1975), no. 3-4, 275-288.

\bibitem{S1991} \textsc{W. Seaman}, \emph{Harmonic two-forms in four dimension},
Proc. Amer. Math. Soc. {\bf 112} (1991), no. 2, 545-548.

\bibitem{SU2019} \textsc{P. Smith, K. Uhlenbeck}, \emph{Removeability of a codimension
four singular set for solutions of a Yang Mills Higgs equation with
small energy}, Surveys in differential geometry 2019. Differential
geometry, Calabi-Yau theory, and general relativity. Part 2, 257-291.

\bibitem{TV2005a} \textsc{G. Tian, J. Viaclovsky},\emph{ Bach-flat asymptotically
locally Euclidean metrics}, Invent. Math.{\bf 160} (2005), no. 2, 357-415.

\bibitem{TV2005b} \textsc{G. Tian, J. Viaclovsky}, \emph{Moduli spaces of critical
Riemannian metrics in dimension four}, Adv. Math. {\bf 196} (2005), no. 2,
346-372.

\bibitem{Y2000}\textsc{D. Yang}, \emph{Rigidity of Einstein 4-manifolds with positive
curvature}, Invent. Math. {\bf 142} (2000), no. 2, 435-450.
\end{thebibliography}

\vspace{0.3cm}
\end{document}